\journal{}
\newtheorem{theorem}{Theorem}[section]
\newtheorem{lemma}{Lemma}[section]
\newtheorem{remark}{Remark}[section]
\newtheorem{assumption}{Assumption}[section]
\definecolor{tabclr}{cmyk}{0,0,1,0}
\begin{document}

\title{Discontinuous Galerkin methods for semilinear elliptic boundary value problem}

\author[SCNU]{Jiajun Zhan}
\ead{zhan@m.scnu.edu.cn}

\author[SCNU]{Liuqiang Zhong}
\ead{zhong@scnu.edu.cn}

\author[SCNU]{Jie Peng\corref{cor}}
\ead{pengjie18@m.scnu.edu.cn}

\cortext[cor]{Corresponding author}
\address[SCNU]{School of Mathematical Sciences, South China Normal University, Guangzhou 510631, China}

\begin{abstract}
A discontinuous Galerkin (DG) scheme for solving semilinear elliptic problem is developed and analyzed in this paper.
The DG finite element discretizations are established, and the corresponding existence and uniqueness theorem is proved by using Brouwer's fixed point method.
Some optimal priori error estimates under both DG norm and $L^2$ norm are presented.
Numerical results are also shown to confirm the efficiency of the proposed approach.
\end{abstract}

\begin{keyword}
Semilinear elliptic problem, Discontinuous Galerkin method, Error estimates
\MSC[2010] 65N30 \sep 35J60 \sep 65M12
\end{keyword}

\maketitle


\section{Introduction}

Given a bounded polygonal domain $\Omega \subset \mathbb{R}^d (d=2, 3)$ with the boundary $\partial \Omega$.
We consider the following semilinear equation
\begin{equation}\label{Eqn:u}
\left\{\begin{aligned}
- \Delta u &= f(x, u), \quad &&\mbox{in $\Omega$}, \\
 u &=0, \quad &&\mbox{on $\partial \Omega$. }
\end{aligned}\right.
\end{equation}
For simplicity, we will replace $f(x,u)$ with $f(u)$ in the following exposition.
The semilinear equation \eqref{Eqn:u} is widely used in many practical applications,
such as describing the  potential of a stable fluid or the stable temperatrue field with a source (See \cite{YouX03:479Chn}).

Discontinuous Galerkin (DG) methods are widely used numerical methodologies for the numerical solutions of partial differential equations.
They have many advantages in contrast to the conforming finite element methods (FEMs).
For example, DG methods allow more fexibility in handling equations whose types change within the computational domain and
the corresponding finite element space has no continuity constraints across the edges/faces of the triangulation.
Because of these advantages, DG methods are extended to various model problems,
such as elliptic problems \cite{ArnoldBrezzi02:1749}, Navier-Stokes equations \cite{BassiRebay97:267}, Maxwell equations \cite{ChungYuen14:613} and so on.
Recent years, some different types of DG methods have been developed,
such as the symmetric interior penalty discontinuous Galerkin (SIPDG) method \cite{Wheeler78:152},  incomplete interior penalty discontinuous Galerkin (IIPDG) method \cite{SunWheeler05:195}, local DG method \cite{CockburnShu98:2440} and so on.

In this paper, the semilinear elliptic boundary value problem is solved by SIPDG method.
First of all, the DG scheme of problem \eqref{Eqn:u} is given, the existence and uniqueness of the finite element solution of the DG scheme is derived by making use of Brouwer fixed point theorem.
Then, the optimal priori error estimates are given under DG norm and $L^2$ norm.
Finally, numerical results are shown to verify the theoretical findings.

To avoid the repeated use of generic but unspecied constants, we shall use $x \lesssim y $ to denote $x \leq C y$, where constant $C$ is a positive constant independent of the variables that appear in the inequalities and especially the mesh parameters. The notation $C_i$, with subscript, denotes specific important constant.

The rest of the paper is organized as follows.
In Section \ref{Cha:3}, the DG method is introduced for the semilinear elliptic problem \eqref{Eqn:u}, and it is proved that the discrete system has unique solution.
In Section \ref{Sec:error}, the optimal prior error estimates under DG norm and $L^2$ norm are provied.
In Section \ref{Cha:5}, the numerical experiments are presented.

\section{DG method}\label{Cha:3}
\setcounter{equation}{0}

In this section, a DG method and the corresponding well-posedness
will be introduced for the semilinear elliptic problem \eqref{Eqn:u}. To this end,
the coresponding Sobolev space and some assumptions need to be introduced firstly.
\subsection{Sobolev spaces and assumptions}

Given domain $S \subset \mathbb{R}^d(d=2, 3)$, for any integer $m \geq 0$, $p \geq 1$, we denote $W^{m, p}(S)$ as the standard Sobolev space with norm $\| \cdot \|_{m, p, S}$.
For simplicity of notation, we denote $H^m(S) = W^{m, 2}(S)$ and $\| \cdot \|_{m, S} = \| \cdot \|_{m, 2, S}$.
Especially when $S= \Omega$, we denote $\| \cdot \|_{m} = \| \cdot \|_{m, 2, \Omega}$, and $H^1_0(\Omega) := \{u\in H^1(\Omega) : u|_{\partial \Omega} =0 \}$.

Semilinear elliptic boundary value problem  \eqref{Eqn:u} generally have multiple solutions (See \cite{ChenCMXieZQ08:42}).
We assume that problem \eqref{Eqn:u} has at least one solution $u \in H_0^1(\Omega) \cap H^{r+1}(\Omega)\ (r\geq 1)$ (See \cite{XuJC94:231, XuJC94:79, YouX03:479Chn}).
In order to analyze the existence and uniqueness of the finite element solution $u_h$
, the following assumptions should be made for the function $f(u)$ (See also Assumption 2.1 of \cite{ZhangWFFanRH20:522}):
\begin{assumption}\label{Assf}
Nonlinear term $f(u)$ satisfies
\begin{equation} \label{Eqn:Assf}
f_u(u)\leq 0,~f_u(u)~\mbox{and}\
f_{uu}(u)\ \mbox{are bounded}.
\end{equation}
\end{assumption}


\subsection{Discontinuous finite element space}

Let $\mathcal{T}_h$ be a quasi-uniform family of partitions of $\Omega$ into $d$-dimensional simplices $K$(triangles if $d=2$ and tetrahedra if $d=3$), where $h:= \max_{K \in \mathcal{T}_h}\{h_K\}$, $h_K$ is the circumscribed circle diameter of element $K \in \mathcal{T}_h$.
We assume that $\mathcal{T}_h$ is conforming which mean that it doesn't contain hanging nodes.

We introduce discontinuous Sobolev space $H^{s}(\mathcal{T}_h)$ in $\mathcal{T}_h$, which is defined as
\begin{equation} \label{Eqn:Hs}
H^{s}(\mathcal{T}_h) = \{ v \in L^2(\Omega) : v|_K \in H^s(K),\ \forall\  K \in \mathcal{T}_h\}, \ \  s \geq 1,
\end{equation}
and the corresponding discontinuous finite element space $V_h$ on $\mathcal{T}_h$ is defined as
\begin{equation}\label{Eqn:Vh}
V_h = \{ v_h \in L^2(\Omega) : v_h|_K \in \mathcal{P}_r(K),\ \forall\  K \in \mathcal{T}_h, v_h|_{\partial \Omega}=0 \},
\end{equation}
where $\mathcal{P}_r(K)$ is the set of polynomials of degree at most $r$ on $K$.

Let $\mathcal{E}_h$ be the set of all edges or faces, $\mathcal{E}_h^0$ be the set of all interior edges or faces, $\mathcal{E}_h^\partial =\mathcal{E}_h \backslash \mathcal{E}_h^0$ be the set of all boundary edges or faces.
Let $e  \in \mathcal{E}_h^0$ be an interior edge or face shared by two elements $K_{\pm} \in \mathcal{T}_h$.
We define averages and jumps of scalar function $v \in H^s(\mathcal{T}_h)$ and vector function $\boldsymbol{w} \in [H^s(\mathcal{T}_h) ]^2$ on $e$ by
$$
\begin{aligned}
&[v] = v_+ \boldsymbol{n}_+ + v_- \boldsymbol{n}_-,  &&\{v\}  = \dfrac{1}{2} (v_+ + v_-),
\\
&[\boldsymbol{w}] = \boldsymbol{w}_+ \boldsymbol{n}_+ + \boldsymbol{w}_- \boldsymbol{n}_-,  &&\{\boldsymbol{w}\}  = \dfrac{1}{2} (\boldsymbol{w}_+ + \boldsymbol{w}_-),
\end{aligned}
$$
where $v_\pm = v|_{K_\pm \cap e}$, $\boldsymbol{w}_\pm = \boldsymbol{w}|_{K_\pm\cap e}$ and $\boldsymbol{n}_\pm$ be the unit normal of $e$ pointing towards the outside of $K_\pm$.

For a boundary edge or face $e \in \mathcal{E}_h^\partial $, we define averages and jumps of scalar function $v \in H^s(\mathcal{T}_h)$ and vector function $\boldsymbol{w} \in [H^s(\mathcal{T}_h) ]^2$ on $e$ by
$$
\begin{aligned}
&[v] = v \boldsymbol{n}, && \{v\}  = v,
\\
&[\boldsymbol{w}] = \boldsymbol{w} \boldsymbol{n}, && \{\boldsymbol{w}\}  = \boldsymbol{w}.
\end{aligned}
$$
where $\boldsymbol{n}$ is the unit normal of $e$ pointing towards the outside of $\Omega$.

\subsection{Discontinuous finite element method}

In this subsection, a discontinuous finite element discrete scheme of the problem \eqref{Eqn:u} is first given, then a series of preparatory lemmas are given, and finally the discrete variational problem is proved to have an unique solution.

\subsubsection{Discrete scheme}

The discontinuous finite element discrete form of \eqref{Eqn:u} is given: Find $u_h \in V_h$, such that
\begin{equation}\label{Eqn:Varuh}
a_h(u_h, v_h) = (f(u_h), v_h), \quad \forall\  v_h \in V_h,
\end{equation}
where
\begin{eqnarray} \nonumber
a_h(w_h, v_h) &=& \sum\limits_{K \in \mathcal{T}_h} (\nabla w_h, \nabla v_h)_K - \sum_{e \in \mathcal{E}_h} \int_e \{\nabla w_h\} \cdot [v_h] \mathrm{d}s
\\  \label{Eqn:ah}
 &&  - \sum_{e \in \mathcal{E}_h} \int_e \{\nabla v_h\} \cdot [w_h] \mathrm{d}s + \sum_{e \in \mathcal{E}_h}  \int_e \dfrac{\lambda}{h_e}  [w_h]\cdot [v_h] \mathrm{d}s,\\\nonumber
(f(u_h), v_h) &=& \sum\limits_{K \in \mathcal{T}_h} (f(u_h), v_h)_K,
\end{eqnarray}
here the constant $\lambda > 0$ is a penalty parameter..

We define DG norm $|\| \cdot \||_h$ by
\begin{equation}\label{Eqn:|||}
|\| w \||_h = \left(  \sum\limits_{K \in \mathcal{T}_h} \| \nabla w\|_{0, K}^2
+ \sum_{e \in \mathcal{E}_h}   \dfrac{h_e}{\lambda} \|\{\nabla w\}\|_{0, e}^2
+  \sum_{e \in \mathcal{E}_h}  \dfrac{\lambda}{h_e} \|[w]\|_{0, e}^2 \right)^{\frac{1}{2}}.
\end{equation}

\subsubsection{Preliminaries}

Some Preliminaries needs to be introduced before the existence and uniqueness of the problem \eqref{Eqn:Varuh} is proved. The first lemma gives the continuity and coerciveness of the bilinear form $a_h(\cdot, \cdot)$.
\begin{lemma}[\cite{ZhangT12Book}, Lemma 2.4]\label{Lem:ahleqgeq}
The bilinear form $a_h(\cdot, \cdot)$ defined in \eqref{Eqn:ah} has the following properties
\begin{equation}\label{Eqn:ahleq}
|a_h(w, v)| \leq 3 |\|u\||_h \cdot |\| v\||_h, \quad \forall\  w, v \in H^{1+s}(\mathcal{T}_h), s \geq1/2,
\end{equation}
and when $\lambda$ is sufficiently large, it is obtained that
\begin{equation}\label{Eqn:ahgeq}
\dfrac{1}{4} |\| v_h\||_h^2 \leq a_h(v_h, v_h), \quad \forall\  v_h \in V_h.
\end{equation}
\end{lemma}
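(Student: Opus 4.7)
The strategy is to handle \eqref{Eqn:ahleq} and \eqref{Eqn:ahgeq} separately, since the continuity bound requires only Cauchy--Schwarz applied piece by piece to $a_h(\cdot,\cdot)$, whereas the coercivity bound needs a discrete trace inequality that is only available on $V_h$.

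For \eqref{Eqn:ahleq}, the plan is to split $a_h(w,v)$ into its four constituent pieces: the broken volume term $T_1 = \sum_{K\in\mathcal{T}_h} (\nabla w,\nabla v)_K$, the two consistency integrals $T_2 = \sum_{e\in\mathcal{E}_h} \int_e \{\nabla w\}\cdot[v]\,\mathrm{d}s$ and $T_3 = \sum_{e\in\mathcal{E}_h} \int_e \{\nabla v\}\cdot[w]\,\mathrm{d}s$, and the penalty term $T_4$. Setting $A_1^2 := \sum_K \|\nabla w\|_{0,K}^2$, $A_2^2 := \sum_e (h_e/\lambda)\|\{\nabla w\}\|_{0,e}^2$, $A_3^2 := \sum_e (\lambda/h_e)\|[w]\|_{0,e}^2$, and defining $B_1,B_2,B_3$ analogously for $v$, elementwise Cauchy--Schwarz yields $|T_1|\le A_1 B_1$; inserting the identity $1 = \sqrt{h_e/\lambda}\cdot\sqrt{\lambda/h_e}$ inside each face integral, followed by Cauchy--Schwarz, produces $|T_2|\le A_2 B_3$, $|T_3|\le A_3 B_2$, and $|T_4|\le A_3 B_3$. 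A single grouped Cauchy--Schwarz step over the pair of consistency contributions,
\[
A_2 B_3 + A_3 B_2 \;\le\; \sqrt{A_2^2+A_3^2}\,\sqrt{B_3^2+B_2^2} \;\le\; |\|w\||_h\,|\|v\||_h,
\]
combined with the trivial bounds $A_1 B_1 \le |\|w\||_h\,|\|v\||_h$ and $A_3 B_3 \le |\|w\||_h\,|\|v\||_h$, accumulates to the required constant $3$.

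For \eqref{Eqn:ahgeq}, I restrict to $v_h\in V_h$ and first observe that, by symmetry of the off-diagonal face integrals,
\[
a_h(v_h,v_h) = \sum_{K} \|\nabla v_h\|_{0,K}^2 - 2\sum_{e} \int_e \{\nabla v_h\}\cdot[v_h]\,\mathrm{d}s + \sum_{e} \tfrac{\lambda}{h_e}\|[v_h]\|_{0,e}^2.
\]
Apply Young's inequality $2|ab|\le \varepsilon a^2 + \varepsilon^{-1}b^2$ with $a = \sqrt{h_e/\lambda}\|\{\nabla v_h\}\|_{0,e}$ and $b = \sqrt{\lambda/h_e}\|[v_h]\|_{0,e}$, then invoke the discrete trace inequality $\|\nabla v_h\|_{0,e}^2 \lesssim h_e^{-1}\|\nabla v_h\|_{0,K}^2$ (valid because $v_h|_K$ is polynomial) to replace the edge norm of $\{\nabla v_h\}$ by a multiple of the bulk norm on the adjacent elements. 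This produces a lower bound of the form $a_h(v_h,v_h) \ge (1-\varepsilon C_T/\lambda)\sum_K \|\nabla v_h\|_{0,K}^2 + (1-\varepsilon^{-1})\sum_e (\lambda/h_e)\|[v_h]\|_{0,e}^2$. Choosing $\varepsilon$ (for instance $\varepsilon = 2$) and then taking $\lambda$ sufficiently large forces both coefficients to stay above a fixed positive number. Finally, since the middle contribution $\sum_e (h_e/\lambda)\|\{\nabla v_h\}\|_{0,e}^2$ appearing in the definition of $|\|v_h\||_h^2$ is itself controlled by $(C_T/\lambda)\sum_K \|\nabla v_h\|_{0,K}^2$, the lower bound collapses into $\tfrac14 |\|v_h\||_h^2 \le a_h(v_h,v_h)$ for $\lambda$ beyond a computable threshold.

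The main obstacle I expect is quantitative rather than conceptual: namely, extracting the precise constants $3$ and $\tfrac14$. Obtaining $3$ in continuity requires the grouped Cauchy--Schwarz on the two consistency terms (four independent estimates would merely give $4$), while obtaining $\tfrac14$ in coercivity demands a careful simultaneous calibration of the Young parameter $\varepsilon$, the trace constant $C_T$, and the lower threshold on $\lambda$. The restriction to $V_h$ in \eqref{Eqn:ahgeq} is forced on us by the discrete trace inequality, which has no analogue on general $H^{1+s}(\mathcal{T}_h)$.
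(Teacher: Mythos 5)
Your proposal is correct: the term-by-term Cauchy--Schwarz with the $\sqrt{h_e/\lambda}\cdot\sqrt{\lambda/h_e}$ splitting, the grouped estimate on the two consistency terms to get the constant $3$, and the Young-plus-discrete-trace argument for coercivity with $\lambda$ large are exactly the standard route, and the constants work out as you describe. Note that the paper does not prove this lemma at all --- it imports it verbatim from Lemma 2.4 of the cited reference \cite{ZhangT12Book} --- so your write-up supplies the argument the paper omits, using only tools (the trace inequality of Lemma \ref{Lem:trace} and the inverse inequality of Lemma \ref{Lem:inverse}) that are available in the text.
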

\begin{remark}
To ensure that the solution of the problem \eqref{Eqn:Varuh} exists uniquely, this paper always assumes that the penalty parameter $\lambda$ is sufficiently large (See Section \ref{Sec:well-posed}). Therefore Lemma \ref{Lem:ahleqgeq} is always true.
\end{remark}

Next lemma provides a quite standard tool in discontinuous finite element analysis.
\begin{lemma}[\cite{ZhangT12Book}, Lemma 2.1] \label{Lem:int}
Let $(v, \boldsymbol{w}) \in H^{s}\left(\mathcal{T}_{h}\right) \times\left[H^{s}\left(\mathcal{T}_{h}\right)\right]^{d} (s \geq 1)$, and the following identity holds
$$
\sum_{K \in \mathcal{T}_{h}} \int_{\partial K} v \boldsymbol{w} \cdot \boldsymbol{n} \mathrm{d} s=\sum_{e \in \mathcal{E}_h} \int_e \{\boldsymbol{w}\} \cdot[v] \mathrm{d} s+\sum_{e \in \mathcal{E}_h^0} \int_e \{v\}[\boldsymbol{w}] \mathrm{d} s.
$$
Further, let $u \in H^{s}\left(\omega_{e}\right) (s \geq 1)$, we have
$$
\int_{e} [u] v d s=0, \quad \forall  v \in L_{2}(e), e \in \mathcal{E}_h^0,
$$
where $\omega_{e}$ is the union of $K_+$ and $K_-$ which share $e$.
\end{lemma}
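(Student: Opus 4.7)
My plan is to handle the two statements independently, since they rely on different observations.

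For the element-boundary identity, I would first regroup the sum $\sum_{K \in \mathcal{T}_h}\int_{\partial K} v\boldsymbol{w}\cdot\boldsymbol{n}\,ds$ as a sum over edges. Every interior edge $e \in \mathcal{E}_h^0$ is visited by exactly the two elements $K_\pm$ that share it, whose outward normals satisfy $\boldsymbol{n}_+ = -\boldsymbol{n}_-$ on $e$, so its contribution is $\int_e (v_+\boldsymbol{w}_+\cdot\boldsymbol{n}_+ + v_-\boldsymbol{w}_-\cdot\boldsymbol{n}_-)\,ds$; every boundary edge $e \in \mathcal{E}_h^\partial$ is visited once with outward normal $\boldsymbol{n}$ and contributes $\int_e v\boldsymbol{w}\cdot\boldsymbol{n}\,ds$. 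The algebraic heart of the argument is the pointwise identity
\begin{equation*}
v_+\boldsymbol{w}_+\cdot\boldsymbol{n}_+ + v_-\boldsymbol{w}_-\cdot\boldsymbol{n}_- = \{\boldsymbol{w}\}\cdot[v] + \{v\}[\boldsymbol{w}]
\end{equation*}
on each interior edge, which I would verify by expanding the definitions of averages and jumps and then using $\boldsymbol{n}_- = -\boldsymbol{n}_+$ to collapse the cross-terms. On a boundary edge the boundary conventions $[v] = v\boldsymbol{n}$ and $\{\boldsymbol{w}\} = \boldsymbol{w}$ give $\{\boldsymbol{w}\}\cdot[v] = v\boldsymbol{w}\cdot\boldsymbol{n}$ automatically, and no $\{v\}[\boldsymbol{w}]$ term is needed because that sum is restricted to $\mathcal{E}_h^0$. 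Summing edge-by-edge then assembles the stated identity.

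For the second claim, my plan is simply to exploit the continuity of the trace. Since $u \in H^s(\omega_e)$ with $s \geq 1$ on $\omega_e = K_+ \cup K_-$, the standard trace theorem produces a single, well-defined $L^2(e)$ trace, so $u_+ = u_-$ almost everywhere on $e$. Consequently $[u] = u_+\boldsymbol{n}_+ + u_-\boldsymbol{n}_- = u|_e(\boldsymbol{n}_+ + \boldsymbol{n}_-) = \boldsymbol{0}$, and therefore $\int_e [u] v\,ds = \boldsymbol{0}$ for every $v \in L^2(e)$.

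The main difficulty here is not analytical but notational: carefully tracking which sum runs over $\mathcal{E}_h$ versus $\mathcal{E}_h^0$, respecting the sign convention $\boldsymbol{n}_- = -\boldsymbol{n}_+$ inside the algebraic expansion, and separating interior from boundary contributions so that the two terms on the right-hand side assemble consistently. Once this bookkeeping is set up the proof reduces to elementary algebra plus the one-line trace-continuity observation, so no inequality or compactness argument enters.
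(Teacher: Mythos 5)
Your proposal is correct: the edge-by-edge regrouping, the pointwise identity $v_+\boldsymbol{w}_+\cdot\boldsymbol{n}_+ + v_-\boldsymbol{w}_-\cdot\boldsymbol{n}_- = \{\boldsymbol{w}\}\cdot[v] + \{v\}[\boldsymbol{w}]$ (whose cross-terms cancel via $\boldsymbol{n}_- = -\boldsymbol{n}_+$), the boundary-edge convention handling, and the trace-continuity argument giving $[u]=\boldsymbol{0}$ on interior edges are exactly the standard proof of this identity. The paper itself supplies no proof --- it cites the result directly from Lemma 2.1 of the reference \cite{ZhangT12Book} --- so there is no in-paper argument to diverge from; your write-up is the canonical one.
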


We will also need the following trace ineuqality and inverse ineuqality.
\begin{lemma}[\cite{RiviereWheeler01:902}, (2.5)]\label{Lem:trace}
Let $e$ denote an edge ($d=2$) or a face ($d=3$) of element $K \in \mathcal{T}_h$, then there exist a positive constant $C_t$ independent of $h$ such that
\begin{equation}\label{Eqn:trace}
\|v\|_{0, e}^2 \leq C_t \left(h_e^{-1} \| v\|_{0, K}^2 + h_e |v|_{1, K}^2 \right), \quad \forall v \in H^1(K).
\end{equation}
\end{lemma}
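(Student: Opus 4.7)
The plan is to prove this classical trace inequality by the standard scaling-to-reference-element argument. I would fix once and for all a reference simplex $\hat{K} \subset \mathbb{R}^d$ together with a designated edge/face $\hat{e} \subset \partial \hat{K}$. On this fixed domain the continuity of the trace operator $H^1(\hat{K}) \to L^2(\hat{e})$ (a standard Sobolev result) yields a constant $\hat{C}$, depending only on $\hat{K}$, such that
\[
\|\hat{v}\|_{0,\hat{e}}^2 \leq \hat{C}\bigl(\|\hat{v}\|_{0,\hat{K}}^2 + |\hat{v}|_{1,\hat{K}}^2\bigr), \qquad \forall\,\hat{v}\in H^1(\hat{K}).
\]

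Next I would employ the affine bijection $F_K:\hat{K}\to K$, $F_K(\hat{x}) = B_K\hat{x} + b_K$, chosen so that $F_K(\hat{e}) = e$, and set $\hat{v}(\hat{x}) := v(F_K(\hat{x}))$. Because $\mathcal{T}_h$ is quasi-uniform, the matrix $B_K$ satisfies the standard bounds $\|B_K\|\lesssim h_K$, $\|B_K^{-1}\|\lesssim h_K^{-1}$, $|\det B_K|\sim h_K^d$, and the surface Jacobian on $\hat{e}$ is comparable to $h_e^{d-1}$. The change-of-variable formulas then produce the scaling relations
\[
\|v\|_{0,K}^2 \sim h_K^{d}\|\hat{v}\|_{0,\hat{K}}^2, \quad |v|_{1,K}^2 \sim h_K^{d-2}|\hat{v}|_{1,\hat{K}}^2, \quad \|v\|_{0,e}^2 \sim h_e^{d-1}\|\hat{v}\|_{0,\hat{e}}^2.
\]

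Substituting these into the reference-element trace inequality, multiplying through by $h_e^{d-1}$, and using $h_e\sim h_K$ (a consequence of shape regularity), I would arrive at
\[
\|v\|_{0,e}^2 \lesssim h_K^{-1}\|v\|_{0,K}^2 + h_K\,|v|_{1,K}^2,
\]
which is the claimed estimate with $C_t$ absorbing $\hat{C}$ and the geometric constants. The only genuine subtlety rather than a real obstacle is keeping the exponents $d-1$, $d$, and $d-2$ straight so that the final inequality is valid uniformly for $d=2$ and $d=3$; the derivation is otherwise entirely routine and relies on nothing beyond the Sobolev trace theorem on $\hat{K}$ together with the shape-regularity of the mesh.
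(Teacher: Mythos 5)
The paper offers no proof of this lemma at all --- it is imported verbatim from Rivi\`ere--Wheeler--Girault (their inequality (2.5)) --- so there is nothing in-paper to compare against; your scaling argument is the standard proof of this estimate and it is correct. The exponent bookkeeping checks out: dividing the reference-element trace inequality through by the surface Jacobian factor $h_e^{d-1}$ and using $\|\hat v\|_{0,\hat K}^2\lesssim h_K^{-d}\|v\|_{0,K}^2$, $|\hat v|_{1,\hat K}^2\lesssim h_K^{2-d}|v|_{1,K}^2$ together with $h_e\sim h_K$ yields exactly the powers $h_e^{-1}$ and $h_e^{+1}$. Two cosmetic remarks only: shape regularity (not full quasi-uniformity) already suffices for the bounds on $B_K$, and since $\hat K$ has $d+1$ faces you should either fix which reference face is mapped to $e$ or take the maximum of the finitely many reference trace constants; neither point affects the validity of your argument.
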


\begin{lemma}[\cite{Ciarlet02Book}, Theorem 3.2.6]\label{Lem:inverse}
Suppose mesh generation $\mathcal{T}_h$ is quasi-uniform, then for any $1 \leq q \leq \infty, 0 \leq l \leq m$, there exist a positive constant $C_s$ independent of $h$ such that
\begin{equation}\label{Eqn:inverse}
|v_h|_{m, q, K} \leq C_s h^{ \frac{d}{q} -  \frac{d}{2}} h^{l-m} |v_h|_{l, 2, K}, \quad \forall K \in \mathcal{T}_h, v_h \in V_h.
\end{equation}
\end{lemma}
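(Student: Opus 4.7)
The plan is to prove this by the classical scaling argument combined with the equivalence of norms on the finite-dimensional polynomial space $\mathcal P_r(\hat K)$. First I would fix a reference simplex $\hat K$ and, using quasi-uniformity of $\mathcal T_h$, realize every $K\in\mathcal T_h$ as the image of $\hat K$ under an affine map $F_K(\hat x)=B_K\hat x+b_K$ whose Jacobian satisfies $\|B_K\|\lesssim h_K$, $\|B_K^{-1}\|\lesssim h_K^{-1}$, and $|\det B_K|\simeq h_K^d$. Pulling $v_h\in V_h$ back by $\hat v:=v_h\circ F_K$ keeps $\hat v$ in the finite-dimensional space $\mathcal P_r(\hat K)$.

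Second, I would invoke the equivalence of norms on $\mathcal P_r(\hat K)$: because this space is finite dimensional, the seminorm $|\cdot|_{m,q,\hat K}$ is dominated by $|\cdot|_{l,2,\hat K}$ (for $l\ge 1$ via a quotient-space argument on $\mathcal P_r(\hat K)/\mathcal P_{l-1}(\hat K)$, and for $l=0$ by using the $L^2$ norm directly), giving
$$|\hat v|_{m,q,\hat K}\le \hat C\,|\hat v|_{l,2,\hat K},$$
with $\hat C$ depending only on $r$, $d$, $m$, $l$, $q$ and the reference element $\hat K$.

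Third, I would combine this with the standard scaling relations
$$|\hat v|_{k,p,\hat K}\simeq h_K^{\,k-d/p}\,|v_h|_{k,p,K},\qquad 1\le p\le\infty,\ k\ge 0,$$
which follow immediately from the change of variables $x=F_K(\hat x)$ together with the Jacobian bounds from step one. Chaining the three estimates produces
$$h_K^{\,m-d/q}\,|v_h|_{m,q,K}\;\lesssim\;|\hat v|_{m,q,\hat K}\;\lesssim\;|\hat v|_{l,2,\hat K}\;\lesssim\;h_K^{\,l-d/2}\,|v_h|_{l,2,K},$$
whence $|v_h|_{m,q,K}\lesssim h_K^{\,d/q-d/2+l-m}\,|v_h|_{l,2,K}$. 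Finally, the quasi-uniformity assumption $h_K\simeq h$ lets me replace $h_K$ by $h$ in the exponent; this last step is essential since $d/q-d/2+l-m$ may well be negative when $m>l$.

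The only real difficulty is the bookkeeping: one must track the precise power of $h_K$ in each scaling identity, and in the inverse estimate on $\hat K$ handle the case $l\ge 1$ where norm equivalence must be applied to a seminorm rather than a norm. Since the argument is entirely classical (this is essentially Theorem 3.2.6 of Ciarlet), no genuinely subtle step arises.
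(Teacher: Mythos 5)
Your argument is correct and is precisely the classical proof of the inverse estimate: affine pullback to a reference simplex, equivalence of (semi)norms on the finite-dimensional space $\mathcal P_r(\hat K)$ (with the quotient by $\mathcal P_{l-1}$ handled as you describe), scaling of Sobolev seminorms, and quasi-uniformity to convert $h_K$ into $h$ when the exponent is negative. The paper offers no proof of its own --- it cites Theorem 3.2.6 of Ciarlet --- and your proof is essentially the one given in that reference, so there is nothing to reconcile.
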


Next lemma shows that the $L^2$ norm can be controlled by the DG norm in $V_h$.
The proof of this lemma is similar to the proof of Lemma 2.2  in \cite{ZhangT12Book}.
\begin{lemma}\label{Lem:L2leq|||}
For any $v_h \in V_h$, we have
\begin{equation}\label{Eqn:L2leq|||}
\|v_h \|_0 \lesssim |\| v_h\||_h.
\end{equation}
\end{lemma}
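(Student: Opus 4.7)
\medskip

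\noindent\textbf{Proof plan.} The plan is to use a standard duality/Aubin--Nitsche style argument. Given $v_h\in V_h$, introduce the auxiliary problem: find $\phi\in H_0^1(\Omega)$ such that
\begin{equation*}
-\Delta\phi = v_h \ \ \text{in } \Omega,\qquad \phi = 0\ \ \text{on }\partial\Omega.
\end{equation*}
Assume (or cite) the elliptic regularity $\phi\in H_0^1(\Omega)\cap H^2(\Omega)$ with $\|\phi\|_{2}\lesssim \|v_h\|_0$; on a general polygonal domain a fractional $H^{1+\sigma}$ version would suffice with minor changes. The upshot is that $[\phi]=0$ and $[\nabla\phi]=0$ on interior edges, and $\phi=0$ on boundary edges.

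Next I would compute $\|v_h\|_0^2 = (v_h,-\Delta\phi)$ by element-wise integration by parts, apply Lemma~\ref{Lem:int} with $v=v_h$ and $\boldsymbol{w}=\nabla\phi$, and use that $[\nabla\phi]=0$ on interior edges to drop the second sum. This gives the identity
\begin{equation*}
\|v_h\|_0^2 \;=\; \sum_{K\in\mathcal{T}_h}(\nabla v_h,\nabla\phi)_K \;-\; \sum_{e\in\mathcal{E}_h}\int_e\{\nabla\phi\}\cdot [v_h]\,\mathrm{d}s.
\end{equation*}
Then by the Cauchy--Schwarz inequality, the first term is bounded by $\big(\sum_K\|\nabla v_h\|_{0,K}^2\big)^{1/2}\|\nabla\phi\|_0$, while the second is bounded by $\big(\sum_e\tfrac{h_e}{\lambda}\|\{\nabla\phi\}\|_{0,e}^2\big)^{1/2}\big(\sum_e\tfrac{\lambda}{h_e}\|[v_h]\|_{0,e}^2\big)^{1/2}$. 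Both factors involving $v_h$ are pieces of $|\|v_h\||_h$, so we obtain
\begin{equation*}
\|v_h\|_0^2 \;\lesssim\; |\|v_h\||_h\left(\|\nabla\phi\|_0^2 + \sum_{e\in\mathcal{E}_h}\frac{h_e}{\lambda}\|\{\nabla\phi\}\|_{0,e}^2\right)^{1/2}.
\end{equation*}

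Finally I would control the $\phi$-side. For each edge $e$ shared by $K$, Lemma~\ref{Lem:trace} yields $\|\{\nabla\phi\}\|_{0,e}^2\lesssim h_e^{-1}\|\nabla\phi\|_{0,K}^2 + h_e|\phi|_{2,K}^2$, whence $\sum_e\tfrac{h_e}{\lambda}\|\{\nabla\phi\}\|_{0,e}^2\lesssim \|\phi\|_1^2 + h^2|\phi|_2^2\lesssim \|\phi\|_2^2$. Combined with $\|\phi\|_2\lesssim \|v_h\|_0$, this produces $\|v_h\|_0^2\lesssim |\|v_h\||_h\,\|v_h\|_0$, and dividing by $\|v_h\|_0$ gives the desired inequality \eqref{Eqn:L2leq|||}.

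The main obstacle is the elliptic regularity $\phi\in H^2(\Omega)$: on a non-convex polygonal domain only $\phi\in H^{1+\sigma}$ for some $\sigma\in(1/2,1]$ is generally available, in which case the trace estimate has to be applied with fractional-order norms, but the structure of the argument is unchanged. All other steps are routine manipulations with the jump/average identity and the trace inequality already recorded in Lemmas~\ref{Lem:int} and \ref{Lem:trace}.
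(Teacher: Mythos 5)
Your proof is correct and follows essentially the same duality argument as the paper: the paper introduces the auxiliary problem $-\Delta w = g$ for arbitrary $g \in L^2(\Omega)$ and concludes from $(v_h,g)\lesssim \|g\|_0\,|\|v_h\||_h$, whereas you simply take the data to be $v_h$ itself and divide at the end, which is an equivalent packaging. The intermediate steps (element-wise integration by parts, Lemma~\ref{Lem:int} to kill the $[\nabla\phi]$ term, Cauchy--Schwarz against the pieces of $|\|\cdot\||_h$, the trace inequality, and the $H^2$ regularity assumption) coincide with those in the paper.
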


\begin{proof}
For any fixed $g \in L^2(\Omega)$, we introduce the following auxiliary problem:
Find $w \in H^2(\Omega)$ such that
\begin{equation*}
\left\{\begin{aligned}
- \Delta w &=g, &&x \in \Omega, \\
w&=0, &&x \in \partial \Omega,
\end{aligned}\right.
\end{equation*}
and we assume that the following regularity result holds
\begin{equation}\label{Eqn:w2leqg0}
\|w\|_2 \lesssim \|g\|_0.
\end{equation}

Let $v_h \in V_h$, using the integration by parts, Lemma \ref{Lem:int}, $w\in H^2(\Omega) $ and Cauchy inequality, we have
\begin{eqnarray}
(v_h, g) &=&\sum\limits_{K \in \mathcal{T}_h}  (\nabla w, \nabla v_h)_K - \sum\limits_{K \in \mathcal{T}_h} \int_{\partial K} \dfrac{\partial w}{\partial \boldsymbol{n}} v_h \mathrm{d}s \nonumber
\\ \nonumber
&=& \sum\limits_{K \in \mathcal{T}_h} (\nabla w, \nabla v_h)_K - \sum_{e\in \mathcal{E}_h} \int_{e} \{\nabla w\} [v_h] \mathrm{d}s
%
\\ \nonumber
&\leq&
\sum\limits_{K \in \mathcal{T}_h}  \| \nabla v_h\|_{0, K} \|\nabla w\|_{0, K} +
\left( \sum_{e\in \mathcal{E}_h} \dfrac{h_e}{\lambda} \| \{\nabla w\}\|_{0, e}^2 \right)^{1/2} \left( \sum_{e\in \mathcal{E}_h} \dfrac{\lambda}{h_e} \| [v_h]\|_{0, e}^2 \right)^{1/2}
%
\\ \nonumber
&\leq&
\left(  \sum\limits_{K \in \mathcal{T}_h} \|\nabla w\|_{0, K}^2 + \sum_{e \in \mathcal{E}_h} \dfrac{h_e}{\lambda} \| \{\nabla w\}\|_{0, e}^2 \right)^{1/2}
\left( \sum\limits_{K \in \mathcal{T}_h} \| \nabla v_h\|_{0, K}^2 + \sum_{e\in \mathcal{E}_h} \dfrac{\lambda}{h_e} \| [v_h]\|_{0, e}^2 \right)^{1/2}
%
\\ \label{Eqn:vhg}
&\lesssim &
\left( \sum\limits_{K \in \mathcal{T}_h} \|\nabla w\|_{0, K}^2 + \sum_{e \in \mathcal{E}_h} \dfrac{h_e}{\lambda} \| \{\nabla w\}\|_{0, e}^2 \right)^{1/2} |\|v_h\||_h.
\end{eqnarray}

Using trace inequality and shape-regularity of $\mathcal{T}_h$, it is obtained that
\begin{equation*}
\int_{e \cap \partial K} \dfrac{h_e}{\lambda} |\{\nabla w\}|^2 \mathrm{d}s \lesssim h_e \left(h_K^{-1}\|\nabla w\|_{0, K}^2 + h_K |w|_{2, K}^2\right) \lesssim \|w\|_{2, K}^2.
\end{equation*}

Substituting the above equation into \eqref{Eqn:vhg} and using \eqref{Eqn:w2leqg0}, we have
\begin{equation*}
(v_h, g) \lesssim \|w\|_{2} |\|v_h\||_h \lesssim \|g\|_0 |\|v_h\||_h, \quad \forall g \in L^2(\Omega).
\end{equation*}

Finally, from the above inequality and notice that $g\in L^2(\Omega)$ is arbitrary, the inequality \eqref{Eqn:L2leq|||} can be obtained.
\end{proof}

\begin{lemma}\label{Lem:inversepromote}
Suppose that the mesh generation $\mathcal{T}_h$ is quasi-uniform, then for any $v_h \in V_h$, we have
\begin{equation}\label{Eqn:inversepromote}
\|v_h\|_{0, \infty} \lesssim (\ln  h)^{1/2} |\|v_h\||_h.
\end{equation}
\end{lemma}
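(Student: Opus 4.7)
The plan is to use a duality argument with a regularized Green's function, which is the standard route to the logarithmic discrete $L^\infty$ bound for piecewise polynomials.

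First, let $z\in\overline\Omega$ be a point at which $|v_h|$ attains its maximum (on some element $K_z$). Introduce a smoothed Dirac mass $\delta_z^h\in C_0^\infty(\Omega)$ (or in $V_h$) supported in a ball of radius $O(h)$ around $z$, normalized so that $(\delta_z^h,v_h)=v_h(z)$ (this is possible because $v_h$ is polynomial on $K_z$) and satisfying $\|\delta_z^h\|_{L^1(\Omega)}\lesssim 1$. Let $w\in H_0^1(\Omega)$ be the solution of the dual problem
\begin{equation*}
-\Delta w=\delta_z^h\ \text{in }\Omega,\qquad w=0\ \text{on }\partial\Omega.
\end{equation*}
Since $\delta_z^h$ is smooth with compact support in $\Omega$, elliptic regularity yields $w\in H^2(\Omega)\subset H^{1+s}(\mathcal{T}_h)$ for any $s\leq 1$, so $w$ is admissible in Lemma \ref{Lem:ahleqgeq}.

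Next, I will exploit the consistency of the DG form. Since $w$ is continuous across interior edges and vanishes on $\partial\Omega$, we have $[w]=0$ and $\{\nabla w\}=\nabla w$ on all $e\in\mathcal{E}_h$. Applying integration by parts elementwise, using Lemma \ref{Lem:int} with $v=v_h$ and $\boldsymbol{w}=\nabla w$ (so $[\nabla w]=0$ on $\mathcal{E}_h^0$), one obtains
\begin{equation*}
a_h(w,v_h)=\sum_{K\in\mathcal{T}_h}(\nabla w,\nabla v_h)_K-\sum_{e\in\mathcal{E}_h}\int_e\{\nabla w\}\cdot[v_h]\,\mathrm ds=(-\Delta w,v_h)=(\delta_z^h,v_h)=v_h(z).
\end{equation*}
Combining this identity with the continuity estimate \eqref{Eqn:ahleq} gives $|v_h(z)|\leq 3\,|\|w\||_h\cdot|\|v_h\||_h$, and it only remains to show $|\|w\||_h\lesssim(\ln h^{-1})^{1/2}$.

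The main technical step is the Green's function estimate, which is where the logarithmic factor enters and which I expect to be the principal obstacle. A standard energy identity yields
\begin{equation*}
\|\nabla w\|_{L^2(\Omega)}^2=(\delta_z^h,w)\leq\|\delta_z^h\|_{L^1}\,\|w\|_{L^\infty(\operatorname{supp}\delta_z^h)}\lesssim\|w\|_{L^\infty(B(z,h))},
\end{equation*}
and the pointwise Green's function behaviour of $-\Delta$ in two dimensions shows that the mollified Green's function satisfies $\|w\|_{L^\infty(B(z,h))}\lesssim\ln h^{-1}$ (via a dyadic annulus decomposition, using $|\nabla_y G(x,y)|\lesssim|x-y|^{-1}$). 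For the edge contributions in $|\|w\||_h$, the jump term vanishes since $[w]=0$, and the trace inequality of Lemma \ref{Lem:trace} combined with quasi-uniformity gives
\begin{equation*}
\sum_{e\in\mathcal{E}_h}\frac{h_e}{\lambda}\|\{\nabla w\}\|_{0,e}^2\lesssim\sum_{K\in\mathcal{T}_h}\bigl(\|\nabla w\|_{0,K}^2+h_K^2|w|_{2,K}^2\bigr)\lesssim\|\nabla w\|_0^2+h^2\|w\|_{H^2(\Omega)}^2.
\end{equation*}
The last piece is $O(1)$ since the Green's function estimate $\|w\|_{H^2(\Omega)}\lesssim h^{-1}$ is balanced by the $h^2$ factor. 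Assembling, $|\|w\||_h^2\lesssim\ln h^{-1}$, and the desired bound \eqref{Eqn:inversepromote} follows. The only delicate points are the precise Green's function bound yielding the logarithm (the dyadic argument) and the verification that $\delta_z^h$ can indeed be chosen with mass $1$ and $\|\delta_z^h\|_{L^1}\lesssim 1$ while reproducing $v_h(z)$; both are classical and can be borrowed from the conforming literature.
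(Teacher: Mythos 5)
Your argument is correct (in two dimensions) but follows a genuinely different route from the paper. The paper's proof is essentially a citation: it invokes the discrete Sobolev inequality (28) of Brenner (2004), $\|v_h\|_{0,\infty}^2\lesssim|\ln h|\bigl(\sum_{K}\|v_h\|_{1,K}^2+\sum_{e}h_e^{-1}\|[v_h]\|_{0,e}^2\bigr)$, and then absorbs the zeroth-order part of $\|v_h\|_{1,K}$ into the DG norm via Lemma \ref{Lem:L2leq|||}. You instead re-derive the discrete Sobolev inequality from scratch by a duality argument with a mollified Dirac mass and the associated regularized Green's function, using the consistency identity $a_h(w,v_h)=(-\Delta w,v_h)$ (the same computation as in \eqref{Eqn:deltauv}) together with the continuity bound \eqref{Eqn:ahleq}, and the logarithm enters through the pointwise bound $\|w\|_{L^\infty(B(z,h))}\lesssim\ln h^{-1}$ for the 2D Green's function. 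Your approach is self-contained modulo classical facts (existence of the reproducing mollifier with bounded $L^1$ norm, Green's function bounds on a polygonal domain, and $\|w\|_{2}\lesssim\|\delta_z^h\|_0\lesssim h^{-1}$ to control the edge terms), and it makes transparent exactly where the $(\ln h)^{1/2}$ comes from; the paper's approach is shorter but opaque. One caveat applies equally to both: the $(\ln h)^{1/2}$ scaling is specific to $d=2$ (in $d=3$ your own Green's function estimate would give $h^{-1/2}$ instead), and you are explicit about this while the paper's statement is silently restricted to the two-dimensional case covered by Brenner's inequality.
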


\begin{proof}
By using inequality (28) of \cite{brenner04:42}, it can be seen that
$$
\|v_h\|_{0, \infty}^2 \lesssim |\ln h| \left(  \sum_{K \in \mathcal{T}_h} \|v_h\|_{1, K}^2 + \sum_{e \in \mathcal{E}_h} \dfrac{1}{h_e} \|[v_h]\|_{0, e}^2   \right).
$$
Using the above inequality and Lemma \ref{Lem:L2leq|||}, it is easy to  obtain \eqref{Eqn:inversepromote}.
\end{proof}

For any $0 \leq m \leq r+1$, there exist an interpolation operator $\Pi_h: H^{r+1}(\Omega) \rightarrow V_h\cap C^0(\Omega)$ satisfies the following error estimation (See \cite{ScottZhang90:483}),
\begin{equation}\label{Eqn:u-Pihu}
\left(\sum_{K \in \mathcal{T}_h} \| w - \Pi_h w \|^2_{m, K} \right)^{1/2} \lesssim h^{r+1-m} \|w\|_{r+1},  \quad \forall w  \in H^{r+1}(\Omega).
\end{equation}

\begin{lemma}\label{Lem:u-Pihu|||h}
Suppose that the interpolation operator $\Pi_h$ is given in \eqref{Eqn:u-Pihu}, then for any $w \in H^{r+1}(\Omega) (r \geq 1)$, we have
\begin{eqnarray}\label{Eqn:u-Pihu|||h}
|\|w- \Pi_h w \||_h \lesssim h^r \|w\|_{r+1}.
\end{eqnarray}
\end{lemma}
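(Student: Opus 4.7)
The plan is to bound each of the three contributions in the DG norm \eqref{Eqn:|||} applied to $w-\Pi_h w$ separately, each by $h^{2r}\|w\|_{r+1}^2$, and then take a square root.

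First I would dispatch the broken $H^1$-seminorm term $\sum_{K}\|\nabla(w-\Pi_h w)\|_{0,K}^2$ directly by applying \eqref{Eqn:u-Pihu} with $m=1$, which immediately gives the required $h^{2r}\|w\|_{r+1}^2$ bound.

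For the average term $\sum_{e}\frac{h_e}{\lambda}\|\{\nabla(w-\Pi_h w)\}\|_{0,e}^2$, I would localize each edge $e$ to an adjacent element $K$ and apply Lemma \ref{Lem:trace} componentwise to $\nabla(w-\Pi_h w)$, obtaining
$$
\|\nabla(w-\Pi_h w)\|_{0,e}^2 \lesssim h_e^{-1}\|\nabla(w-\Pi_h w)\|_{0,K}^2 + h_e|w-\Pi_h w|_{2,K}^2.
$$
Multiplying by $h_e/\lambda$, using quasi-uniformity so that $h_e\sim h$, summing over $e$, and applying \eqref{Eqn:u-Pihu} with $m=1$ and $m=2$ then produces a bound of order $h^{2r}\|w\|_{r+1}^2$.

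The subtlest piece is the jump term $\sum_e \frac{\lambda}{h_e}\|[w-\Pi_h w]\|_{0,e}^2$, which I would split into interior and boundary contributions. On any interior edge $e\in\mathcal{E}_h^0$, the inclusion $w\in H^{r+1}(\Omega)\subset H^1(\Omega)$ gives $[w]|_e=0$, while $\Pi_h w\in C^0(\Omega)$ gives $[\Pi_h w]|_e=0$, so $[w-\Pi_h w]|_e=0$ and these edges contribute nothing. On each remaining boundary edge I would again invoke Lemma \ref{Lem:trace}, this time applied to $w-\Pi_h w$ itself, and then use \eqref{Eqn:u-Pihu} with $m=0$ and $m=1$ to estimate
$$
\frac{\lambda}{h_e}\|w-\Pi_h w\|_{0,e}^2 \lesssim h_e^{-2}\|w-\Pi_h w\|_{0,K}^2 + |w-\Pi_h w|_{1,K}^2 \lesssim h^{2r}\|w\|_{r+1}^2.
$$
Summing the three estimates and taking a square root then delivers \eqref{Eqn:u-Pihu|||h}. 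The main obstacle is mostly book-keeping: making the powers of $h$ coming from the trace inequality, the penalty weights $h_e/\lambda$ and $\lambda/h_e$, and the interpolation order $h^{r+1-m}$ all balance to exactly $h^r$; the one genuinely structural point is exploiting that $[w-\Pi_h w]$ vanishes on interior edges, so that no stronger regularity of $w$ across element interfaces is needed.
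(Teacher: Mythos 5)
Your proof is correct and follows essentially the same route as the paper's: estimate each of the three contributions to the DG norm separately, using \eqref{Eqn:u-Pihu} with $m=1$ for the broken $H^1$ term, the trace inequality of Lemma \ref{Lem:trace} (with $m=1,2$) for the average term, and the continuity of $w-\Pi_h w$ to annihilate the interior jumps. If anything you are more careful than the paper, which silently drops the entire jump term on the grounds that $w-\Pi_h w\in C^0(\Omega)$, even though boundary edges can still contribute when $w$ does not vanish on $\partial\Omega$; your explicit trace-inequality estimate on the boundary edges closes that small gap.
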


\begin{proof}
Using the definition of $|\| \cdot \||_h$, trace inequality, $w- \Pi_h w \in C^0(\Omega)$, quasi-uniform assumption of $\mathcal{T}_h$ and \eqref{Eqn:u-Pihu}, we have
\begin{eqnarray}
\lefteqn{|\|w- \Pi_h w \||_h } \nonumber
\\
&=& \left( \sum_{K \in \mathcal{T}_h} \|\nabla (w- \Pi_h w)\|^2_{0, K}
+ \sum\limits_{e \in \mathcal{E}_h} \dfrac{h_e}{\lambda} \| \{\nabla (w- \Pi_h w) \} \|_{0, e}^2
 + \sum\limits_{e \in \mathcal{E}_h} \dfrac{\lambda}{h_e} \|[w- \Pi_h w]\|_{0, e}^2  \right)^{1/2}
\nonumber
%
%
\\ \nonumber
&\lesssim&\sum_{K \in \mathcal{T}_h} \|\nabla (w- \Pi_h w)\|_{0, K}
+ \sum\limits_{K \in \mathcal{T}_h} \dfrac{h_e^{1/2}}{\lambda^{1/2}} \left(h_K^{-1/2} \| \nabla (w- \Pi_h w) \|_{0, K}
 + h_K^{1/2} |w- \Pi_h w|_{2, K} \right)
%
%
\\ \nonumber
&\lesssim&\sum_{K \in \mathcal{T}_h} \|\nabla (w- \Pi_h w)\|_{0, K}
+ \sum\limits_{K \in \mathcal{T}_h} (\| \nabla (w- \Pi_h w) \|_{0, K} + h_e |w- \Pi_h w|_{2, K} )
%
\\ \nonumber
\\ \label{Eqn:|||u-Piu|||}
&\lesssim& h^r \|w\|_{r+1, \Omega},
\end{eqnarray}
which completes the proof.
\end{proof}

\subsubsection{Weak formulation}

The weak formulation of \eqref{Eqn:u} suitable for the DG method is to find $u \in H_0^1(\Omega) \cap H^{2}(\mathcal{T}_h)$ such that
\begin{equation}\label{Eqn:XR}
a_h(u, v)  = (f(u), v), \quad \forall\  v \in H_0^1(\Omega) \cap H^{2}(\mathcal{T}_h),
\end{equation}
where the bilinear form $a_h(\cdot, \cdot)$  is defined in \eqref{Eqn:ah}.

\begin{lemma}\label{Lem:uWW}
If $u \in H_0^1(\Omega) \cap H^{2}(\Omega)$ is the solution of \eqref{Eqn:u}, then $u$ satisfies \eqref{Eqn:XR}.
On the contrary, if $u \in H_0^1(\Omega) \cap H^{2}(\Omega)$ is a solution of \eqref{Eqn:XR}, then $u$ is the solution of \eqref{Eqn:u}.
\end{lemma}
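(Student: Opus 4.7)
The plan is to treat the two implications separately, with the essential tool in both directions being the elementwise integration by parts identity from Lemma \ref{Lem:int} together with the observation that membership in $H^1_0(\Omega) \cap H^2(\Omega)$ forces jumps of the function and its normal derivative to vanish on interior edges.

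For the forward direction, I would take $u \in H^1_0(\Omega) \cap H^2(\Omega)$ solving \eqref{Eqn:u}, multiply $-\Delta u = f(u)$ by an arbitrary test function $v \in H^1_0(\Omega) \cap H^2(\mathcal{T}_h)$, and integrate by parts on each element $K \in \mathcal{T}_h$ to obtain
\begin{equation*}
\sum_{K \in \mathcal{T}_h} (\nabla u, \nabla v)_K - \sum_{K \in \mathcal{T}_h} \int_{\partial K} \nabla u \cdot \boldsymbol{n}\, v\, \mathrm{d}s = (f(u), v).
\end{equation*}
Applying Lemma \ref{Lem:int} with $\boldsymbol{w} = \nabla u$ rewrites the boundary sum as $\sum_{e \in \mathcal{E}_h} \int_e \{\nabla u\} \cdot [v]\, \mathrm{d}s + \sum_{e \in \mathcal{E}_h^0} \int_e \{v\}[\nabla u]\, \mathrm{d}s$. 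Since $u \in H^2(\Omega)$, the second contribution vanishes because $[\nabla u] = 0$ across interior edges. Next I would append the two terms $-\sum_e \int_e \{\nabla v\}\cdot[u]\,\mathrm{d}s$ and $\sum_e \int_e (\lambda/h_e)[u]\cdot[v]\,\mathrm{d}s$ from the definition of $a_h$; both are zero, since $u \in H^1_0(\Omega)$ implies $[u] = 0$ on every $e \in \mathcal{E}_h^0$ and $u = 0$ on $\partial\Omega$ gives $[u] = 0$ on $\mathcal{E}_h^\partial$ as well. This assembles the left-hand side exactly into $a_h(u, v)$ and yields \eqref{Eqn:XR}.

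For the converse, I would start with $u \in H^1_0(\Omega) \cap H^2(\Omega)$ satisfying \eqref{Eqn:XR} and restrict the test space to $C^\infty_c(\Omega) \subset H^1_0(\Omega) \cap H^2(\mathcal{T}_h)$. For such $v$ all jumps $[v]$ vanish (and so do $\{\nabla v\}\cdot[u]$ and the penalty term, as in the forward direction), so $a_h(u,v)$ collapses to $\sum_K (\nabla u, \nabla v)_K = (\nabla u, \nabla v)$. Integration by parts in reverse and the density of $C^\infty_c(\Omega)$ in $L^2(\Omega)$ then force $-\Delta u = f(u)$ almost everywhere in $\Omega$, while the boundary condition $u|_{\partial\Omega} = 0$ is already encoded in $u \in H^1_0(\Omega)$.

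The only subtle point, and what I expect to be the main bookkeeping obstacle, is the careful handling of the boundary edges: I need to check that the jump/average conventions for $e \in \mathcal{E}_h^\partial$ do not introduce spurious contributions to $a_h(u,v)$. Since both $u$ and $v$ lie in $H^1_0(\Omega)$, their traces vanish on $\partial\Omega$, so $[u]$ and $[v]$ vanish on $\mathcal{E}_h^\partial$ and every boundary term in \eqref{Eqn:ah} drops out, leaving the two reductions above valid.
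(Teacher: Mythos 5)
Your proposal is correct and follows essentially the same route as the paper's proof: elementwise integration by parts combined with Lemma \ref{Lem:int}, using $u \in H^2(\Omega)$ to kill $[\nabla u]$ on interior edges and $u, v \in H^1_0(\Omega)$ to kill the jump and penalty terms, then testing against $C^\infty_c(\Omega)$ and invoking the fundamental lemma of the calculus of variations for the converse. The only cosmetic difference is that the paper reduces $a_h(u,v)$ to $\sum_K(-\Delta u, v)_K$ for general $v$ before restricting to smooth compactly supported test functions, whereas you restrict first; this changes nothing of substance.
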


\begin{proof}
Firstly we prove that if $u \in H_0^1(\Omega) \cap H^{2}(\Omega)$ is the solution of \eqref{Eqn:u}, then $u$ is the solution of \eqref{Eqn:XR}.

For any $v \in  H_0^1(\Omega) \cap H^{2}(\mathcal{T}_h)$, using Green formula,  Lemma \ref{Lem:int}, the smoothness of $u$ and the definition of $a_h(\cdot, \cdot)$, we have
\begin{eqnarray}\nonumber
(-\Delta u, v)
&=& \sum_{K \in \mathcal{T}_h} (\nabla u , \nabla v)_K - \sum_{K \in \mathcal{T}_h} \int_{\partial K} v \cdot \nabla u \cdot n \mathrm{d}s
\\ \nonumber
&=& \sum_{K \in \mathcal{T}_h} (\nabla u , \nabla v)_K - \sum_{e \in \mathcal{E}_h} \int_e \{\nabla u\} \cdot[v] \mathrm{d} s-\sum_{e \in \mathcal{E}_h^0} \int_e \{v\}[\nabla u] \mathrm{d} s
\\ \nonumber
&=& \sum_{K \in \mathcal{T}_h} (\nabla u , \nabla v)_K - \sum_{e \in \mathcal{E}_h} \int_e \{\nabla u\} \cdot[v] \mathrm{d} s
\\ \label{Eqn:deltauv}
&=& a_h(u, v).
\end{eqnarray}
For any $v \in  H_0^1(\Omega) \cap H^{2}(\mathcal{T}_h)$, integrate with $v$ on both sides of the first equation of \eqref{Eqn:u}, we obtain
\begin{equation}\label{Eqn:deltauf}
(-\Delta u, v) = (f(u), v).
\end{equation}
Then combining \eqref{Eqn:deltauv} and \eqref{Eqn:deltauf}, we can see that $u$ is the solution of \eqref{Eqn:XR}.

We next show that if $u \in H_0^1(\Omega) \cap H^{2}(\Omega)$ is a solution of \eqref{Eqn:XR}, then $u$ is the solution of \eqref{Eqn:u}.

By the definition of $a_h(\cdot, \cdot)$, the smoothness of $u\in H_0^1(\Omega) \cap H^{2}(\Omega)$, we can obtain
$$
\sum_{K \in \mathcal{T}_h} (\nabla u, \nabla v)_K - \sum_{e \in \mathcal{E}_h} \int_e \{\nabla u \} \cdot [v] \mathrm{d}s = (f(u), v), \quad \forall v \in H_0^1(\Omega) \cap H^{2}(\mathcal{T}_h),
$$
Then using Green formula, Lemma \ref{Lem:int} and the smoothness of $u$ in the left hand side of the above equation,  we derive
$$
\sum_{K \in \mathcal{T}_h} (- \Delta u,  v)_K  = (f(u), v), \quad \forall v \in H_0^1(\Omega) \cap H^{2}(\mathcal{T}_h).
$$
Let $v\in C_0^\infty(\Omega) \subset H_0^1(\Omega) \cap H^{2}(\mathcal{T}_h)$ in the above equation, and using the additivity of the integral of $L^2$, the basic lemma of the variational method, we can obtain
$$
-\Delta u = f(u), \quad a.e.\ \mbox{in $\Omega$. }
$$
The boundary condition $u=0$ on $\partial \Omega$ is trival, since $ u \in H_0^1(\Omega) \cap H^{2}(\Omega)$.
\end{proof}

\subsubsection{Existence and uniqueness of finite element solution}
\label{Sec:well-posed}

We will use the Brouwer fixed point theorem to prove the
 well-posedness 
of the solution $u_h$ of the problem \eqref{Eqn:Varuh} in this subsection.

Similar to the idea in \cite{GudiNataraj08:233},  with solution $u$ given in problem \eqref{Eqn:u}, we consider the following problem:
Find $u_h \in V_h$ such that
\begin{eqnarray} \nonumber
\lefteqn{a_h(u-u_h, v_h) -(f_u(u)(u-u_h), v_h) }\\  \label{Eqn:au-uh}
&&= (f(u), v_h) - (f(u_h), v_h) - (f_u(u)(u-u_h), v_h),~~\forall~v_h\in V_h.
\end{eqnarray}
Equations \eqref{Eqn:Varuh} and \eqref{Eqn:XR} imply that the problem \eqref{Eqn:Varuh} and \eqref{Eqn:au-uh} are equivalent.

To prove the existence of $u_h$ in problem \eqref{Eqn:au-uh}, for any $v_h \in V_h$, we define operator $\Phi_h: V_h \rightarrow V_h$ by
\begin{align}\nonumber
& a_h(u-\Phi_h(w_h), v_h) -(f_u(u)(u-\Phi_h(w_h)), v_h)\\ \label{Eqn:au-Phiwh}
& = (f(u), v_h) - (f(w_h), v_h) - (f_u(u)(u-w_h), v_h).
\end{align}
It can be proved that $\Phi_h$ is well defined, i.e., for any $w_h \in V_h$, there exist a unique $\Phi_h(w_h) \in V_h$ satisfies \eqref{Eqn:au-Phiwh}.
In fact, we can rewrite \eqref{Eqn:au-Phiwh} as
\begin{equation}\label{Eqn:au-PhiwhLS}
b_h(\Phi(w_h), v_h) = (F(u, w_h), v_h), \quad \forall v_h \in V_h,
\end{equation}
where
\begin{align*}
b_h(\Phi(w_h), v_h) &= a_h(\Phi_h(w_h), v_h) -(f_u(u)\Phi_h(w_h), v_h),
\\
(F(u, w_h), v_h) &=a_h(u, v_h)   - (f(u), v_h) + (f(w_h), v_h) - (f_u(u)w_h, v_h).
\end{align*}

The coerciveness of $a_h(\cdot, \cdot)$ and Assumption \ref{Assf} show that the bilinear form $b_h(\cdot, \cdot)$ is coercive, namely,
\begin{equation}\label{Eqn:positive}
|\| v_h \||_h^2  \lesssim a_h(v_h,v_h) - (f_u(u)v_h, v_h) = b_h(v_h, v_h).
\end{equation}
Using the continuity of $a_h(\cdot, \cdot)$, Assumption \ref{Assf}, Cauchy inequality and Lemma \ref{Lem:L2leq|||}, for any $w_h, v_h \in V_h$, we have
\begin{eqnarray} \nonumber
b_h(w_h,v_h) &=& a_h(w_h,v_h) - (f_u(u)w_h, v_h)
\\ \nonumber
&\lesssim& |\| w_h \||_h  \cdot |\| v_h\||_h +  \|w_h\|_0 \cdot  \|v_h\|_0
\\ \label{Eqn:continuity}
&\lesssim&  |\| w_h \||_h  \cdot |\| v_h\||_h.
\end{eqnarray}
The formula \eqref{Eqn:continuity} is the continuity of bilinear form $b_h(\cdot, \cdot)$.
In addition, by the continuity of the bilinear form $a_h(\cdot, \cdot)$, Assumption \ref{Assf}, Cauchy inequality and Lemma \ref{Lem:L2leq|||}, we know that $(F(u, w_h), \cdot)$ is a continuous linear functional defined on $V_h$ when $u$ and $w_h$ are given.
Therefore, according to the Lax-Milgram theorem, for any given $w_h \in V_h$, the problem \eqref{Eqn:au-PhiwhLS} has a unique solution, which is written as $\Phi_h(w_h)$.
Thus the operator $\Phi_h$ is well defined.

For a given solution $u$ of problem \eqref{Eqn:u}, we define a space
\begin{equation}\label{Eqn:Bdelta}
B_h=\{ v_h \in V_h : |\| \Pi_h u - v_h\||_h \leq \delta_h  \},
\end{equation}
where the interpolation operator $\Pi_h$ is defined in \eqref{Eqn:u-Pihu},
$\delta_h = C_1 |\| \Pi_h u - u\||_h$, $C_1 >1$ is a constant which can be sufficiently large and does not depend on the mesh size.

We can prove that the space $B_h$ is a non-empty compact convex subset.
In fact, since $v_h = \Pi_h u  \in  B_h$, $B_h$ is a non-empty space.
By using the triangle inequality, it is also easy to verify that the space $B_h$ is convex.
In a finite dimensional space, we can prove that $B_h$ is compact by proving that $B_h$ is bounded and closed.
Let's first prove that $B_h$ is bounded.
According to the triangle inequality and the definition of the space $B_h$, we have
$$
|\|v_h\||_h - |\|\Pi_h u \||_h \leq |\| \Pi_h u - v_h\||_h \leq \delta_h,  \quad \forall v_h \in B_h.
$$
Thus
$$
|\|v_h\||_h \leq \delta_h + |\|\Pi_h u \||_h, \quad \forall\  v_h \in V_h.
$$
And then we can prove that $B_h$ is closed.
Let $\{v_n\}$ be the Cauchy sequence in $B_h$, then there exist $v \in V_h$ such that $v_n \rightarrow v \ (n \rightarrow \infty)$.
Therefore, for any $q>0$, there exist $v_{n_0}$ such that $|\| v-v_{n_0}\||_h \leq q$, then we have
$$
|\|v- \Pi_hu\||_h \leq |\|v- v_{n_0}\||_h + |\|v_{n_0}- \Pi_hu\||_h \leq q + \delta_h.
$$
So $|\|v- \Pi_hu\||_h \leq \delta_h$, i.e. $v \in B_h$.
In conclusion, the space $B_h$ is a non-empty compact convex subset.

To prove that the solution of problem \eqref{Eqn:Varuh} is unique, several lemmas need to be proved first.
\begin{lemma}\label{Lem:PhiBsubsetB}
Suppose the operator $\Phi_h$ and the functional space $B_h$ are given by \eqref{Eqn:au-Phiwh} and \eqref{Eqn:Bdelta}, respectively, and $r\geq d/2  $, then when $h$ is small enough, we have
$
\Phi_h(B_h) \subset B_h.
$
\end{lemma}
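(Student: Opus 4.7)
Fix $w_h \in B_h$ and set $e_h := \Phi_h(w_h) - \Pi_h u \in V_h$; the inclusion $\Phi_h(B_h)\subset B_h$ amounts to proving $|\|e_h\||_h \leq \delta_h$. The starting point is the coercivity bound \eqref{Eqn:positive}, giving $|\|e_h\||_h^2 \lesssim b_h(e_h,e_h)$. I will then split
$$b_h(e_h,e_h) = b_h\bigl(\Phi_h(w_h) - u,\,e_h\bigr) + b_h\bigl(u - \Pi_h u,\,e_h\bigr)$$
and estimate the two pieces separately. Rewriting \eqref{Eqn:au-Phiwh} as $b_h(u - \Phi_h(w_h),v_h) = (f(u) - f(w_h),v_h) - (f_u(u)(u - w_h),v_h)$ and applying a second-order Taylor expansion of $f$ about $u$ yields $f(u) - f(w_h) = f_u(u)(u - w_h) - \tfrac12 f_{uu}(\xi)(u - w_h)^2$, so
$$b_h\bigl(\Phi_h(w_h) - u,\,e_h\bigr) = \tfrac12\bigl(f_{uu}(\xi)(u - w_h)^2,\,e_h\bigr).$$

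\textbf{Estimating the two pieces.} Boundedness of $f_{uu}$ from Assumption \ref{Assf}, together with an $L^\infty\times L^2\times L^2$ H\"older bound and Lemma \ref{Lem:L2leq|||}, gives
$|b_h(\Phi_h(w_h) - u,e_h)| \lesssim \|u - w_h\|_{0,\infty}\,\|u - w_h\|_0\,|\|e_h\||_h$.
For the interpolation piece, continuity of $a_h$ from Lemma \ref{Lem:ahleqgeq}, boundedness of $f_u$ and Lemma \ref{Lem:L2leq|||} deliver $|b_h(u - \Pi_h u,e_h)| \lesssim |\|u - \Pi_h u\||_h\,|\|e_h\||_h$. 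The triangle inequality, Lemma \ref{Lem:L2leq|||} and the standard estimate $\|u - \Pi_h u\|_0 \lesssim h^{r+1}\|u\|_{r+1}$ give $\|u - w_h\|_0 \lesssim \delta_h + h^{r+1}\|u\|_{r+1} \lesssim \delta_h$ for $h$ small enough. For the $L^\infty$ factor, Lemma \ref{Lem:inversepromote} applied to $\Pi_h u - w_h$ and the Sobolev embedding $H^{r+1}(\Omega) \hookrightarrow L^\infty(\Omega)$ (valid since $r \geq d/2$) produce
$$\|u - w_h\|_{0,\infty} \leq \|u - \Pi_h u\|_{0,\infty} + \|\Pi_h u - w_h\|_{0,\infty} \lesssim h^{r+1-d/2}\|u\|_{r+1} + |\ln h|^{1/2}\,\delta_h,$$
which tends to $0$ as $h \to 0$.

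\textbf{Closing the inclusion.} Dividing the coercivity bound by $|\|e_h\||_h$, combining the two estimates with Lemma \ref{Lem:u-Pihu|||h} and the definition $\delta_h = C_1|\|u - \Pi_h u\||_h$, I arrive at
$$|\|e_h\||_h \leq \bigl(C_*/C_1 + C_*\,\|u - w_h\|_{0,\infty}\bigr)\,\delta_h$$
with a constant $C_*$ independent of $h$, $w_h$ and $C_1$. Fixing $C_1 \geq 2C_*$ and then taking $h$ small enough so that $C_*\,\|u - w_h\|_{0,\infty} \leq 1/2$ yields $|\|e_h\||_h \leq \delta_h$, i.e.\ $\Phi_h(w_h) \in B_h$. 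The main obstacle is the quadratic residual $(u - w_h)^2$: a plain $L^2\times L^2$ Cauchy--Schwarz leaves a factor $\|u-w_h\|_0$ which is only $O(\delta_h)$, so one must instead split $L^\infty\times L^2$ and use the logarithmic inverse bound of Lemma \ref{Lem:inversepromote} together with the Sobolev embedding --- which is precisely where the hypothesis $r \geq d/2$ enters, and which ensures that the $\|u - w_h\|_{0,\infty}$ factor can be made arbitrarily small so that the contraction-type absorption argument closes.
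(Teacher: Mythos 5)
Your proof follows the same architecture as the paper's: coercivity of $b_h$ from \eqref{Eqn:positive}, splitting $b_h(\Pi_h u-\Phi_h(w_h),e_h)$ through the exact solution $u$, using \eqref{Eqn:au-Phiwh} plus a Taylor expansion to reduce the first piece to the quadratic residual $(f_{uu}(\cdot)(u-w_h)^2,e_h)$, continuity for the interpolation piece, and then the absorption argument ``fix $C_1$ large, then $h$ small.'' The one genuine divergence is the H\"older splitting of the trilinear term: the paper estimates it as $\|u-w_h\|_0^2\,\|\Pi_h u-\Phi_h(w_h)\|_{0,\infty}$, so that the $L^\infty$ norm lands on a \emph{discrete} function and Lemma \ref{Lem:inversepromote} applies directly, the smallness factor being $C_1h^r(\ln h)^{1/2}\|u\|_{r+1}$ coming from \eqref{Eqn:wh-uleqhh}; you instead use $\|u-w_h\|_{0,\infty}\,\|u-w_h\|_0\,\|e_h\|_0$, which forces you to control $\|u-\Pi_h u\|_{0,\infty}$ and hence to invoke an $L^\infty$ interpolation estimate $\|u-\Pi_h u\|_{0,\infty}\lesssim h^{r+1-d/2}\|u\|_{r+1}$ that is standard but not among the tools the paper states (it only gives \eqref{Eqn:u-Pihu} in $H^m$ norms); this is an extra ingredient you should justify, and note that the embedding $H^{r+1}\hookrightarrow L^\infty$ alone does not yield the $h^{r+1-d/2}$ rate. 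Both splittings close the argument. One step of yours is not justified as written: the absorption $\|u-w_h\|_0\lesssim \delta_h+h^{r+1}\|u\|_{r+1}\lesssim\delta_h$ fails in general, because $\delta_h=C_1|\|u-\Pi_h u\||_h$ is the \emph{actual} interpolation error, which may decay faster than $h^{r+1}\|u\|_{r+1}$, so the second term cannot be absorbed into the first. The fix is immediate: since $u-\Pi_h u\in H_0^1(\Omega)$, the Poincar\'e inequality gives $\|u-\Pi_h u\|_0\lesssim\|\nabla(u-\Pi_h u)\|_0\le|\|u-\Pi_h u\||_h=\delta_h/C_1$, whence $\|u-w_h\|_0\lesssim\delta_h$ directly (the paper sidesteps this entirely by bounding $\|u-w_h\|_0$ by $C_1h^r\|u\|_{r+1}$ and keeping $\delta_h$ only in the DG-norm factor). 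With that repair your argument is correct and the quantifier order ($C_1$ first, then $h$, with the admissible $h$ allowed to depend on $C_1$) is handled properly.
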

\begin{proof}
For any $w_h \in B_h$, by \eqref{Eqn:positive}, \eqref{Eqn:au-Phiwh} and \eqref{Eqn:continuity}, we can obtain
\begin{eqnarray}
\lefteqn{|\| \Pi_h u - \Phi_h(w_h)\||_h^2} \nonumber
\\ \nonumber
&\lesssim& a_h(\Pi_h u - \Phi_h(w_h),\Pi_h u - \Phi_h(w_h))
\\ \nonumber
&&- (f_u(u)(\Pi_h u - \Phi_h(w_h)), \Pi_h u - \Phi_h(w_h))
%
\\ \nonumber
&=&
a_h(u - \Phi_h(w_h),\Pi_h u - \Phi_h(w_h)) - (f_u(u)(u - \Phi_h(w_h)), \Pi_h u - \Phi_h(w_h))
\\ \nonumber
&&+ a_h(\Pi_h u - u,\Pi_h u - \Phi_h(w_h)) - (f_u(u)(\Pi_h u - u), \Pi_h u - \Phi_h(w_h))
\\ \nonumber
&\lesssim& (f(u), \Pi_h u - \Phi_h(w_h)) - (f(w_h), \Pi_h u - \Phi_h(w_h))
\\ \label{Eqn:Pihu-PhihwhLS}
&&  - (f_u(u)(u-w_h), \Pi_h u - \Phi_h(w_h)) + |\| \Pi_h u - u \||_h \cdot |\| \Pi_h u - \Phi_h(w_h) \||_h. 
\end{eqnarray}

Using Taylor expansion, it is obtained that
\begin{equation}\label{Eqn:TLwh1}
 f(w_h) = f(u) + f_u(u)(w_h-u) + f_{uu}(\theta_1)(w_h-u)^2,
\end{equation}
where $\theta_1$ is between $w_h$ and $u$.

Substituting \eqref{Eqn:TLwh1} into \eqref{Eqn:Pihu-PhihwhLS}, and using Assumption \ref{Assf}, the definition of $\delta_h$ in \eqref{Eqn:Bdelta}, \eqref{Eqn:L2leq|||}, Lemma \ref{Lem:inversepromote} and \eqref{Eqn:Bdelta}, we obtain
\begin{eqnarray}\nonumber
\lefteqn{|\| \Pi_h u - \Phi_h(w_h)\||_h^2}
\\ \nonumber
&\lesssim& |-(f_{uu}(\theta_1)(w_h-u)^2, \Pi_h u - \Phi_h(w_h))| +  |\| \Pi_h u - u \||_h \cdot |\| \Pi_h u - \Phi_h(w_h) \||_h
\\ \nonumber
&\lesssim& \|w_h - u \|_0^2 \cdot \|\Pi_h u - \Phi_h(w_h)\|_{0, \infty} + \dfrac{1}{C_1} \delta_h |\| \Pi_h u - \Phi_h(w_h) \||_h
%
\\ \nonumber
&\lesssim& \|w_h - u \|_0 \cdot |\|w_h - u \||_h \cdot  (\ln h)^{1/2} |\|\Pi_h u - \Phi_h(w_h)\||_{h} +  \dfrac{1}{C_1} \delta_h |\| \Pi_h u - \Phi_h(w_h) \||_h
%
\\ \nonumber
&\lesssim& (\ln h)^{1/2} \|w_h - u \|_0 \cdot ( |\|w_h - \Pi_hu \||_h + |\|\Pi_hu - u \||_h) \cdot |\|\Pi_h u - \Phi_h(w_h)\||_h
\\ \nonumber
&& +  \dfrac{1}{C_1} \delta_h |\| \Pi_h u - \Phi_h(w_h) \||_h
\\  \nonumber
&\lesssim& (\ln h)^{1/2} \|w_h - u \|_0 \cdot \left( \delta_h  +  \dfrac{1}{C_1} \delta_h \right) \cdot |\|\Pi_h u - \Phi_h(w_h)\||_h
\\ \label{|||Piu-Phiwh|||}
&&+  \dfrac{1}{C_1} \delta_h |\| \Pi_h u - \Phi_h(w_h) \||_h.
\end{eqnarray}

Using triangle inequality, Lemma \ref{Lem:L2leq|||}, interpolation error estimation \eqref{Eqn:u-Pihu}, \eqref{Eqn:Bdelta} and \eqref{Eqn:u-Pihu|||h}, we have
\begin{eqnarray}
\| w_h-u\|_{0} &\leq & \| w_h- \Pi_h u \|_{0} + \| \Pi_h u -u\|_{0} \nonumber
\\ \nonumber
&\lesssim& |\|w_h - \Pi_h u \||_h + h^{r+1}\|u\|_{r+1}
%
\\    \nonumber
&\lesssim& C_1 |\|u - \Pi_h u \||_h + h^{r+1}\|u\|_{r+1}
%
\\    \nonumber
&\lesssim&  C_1 h^{r} \|u\|_{r+1} + h^{r+1}\|u\|_{r+1}
%
\\  \label{Eqn:wh-uleqhh}
&\lesssim& C_1  h^{r} \|u\|_{r+1}.
\end{eqnarray}

Substituting \eqref{Eqn:wh-uleqhh} into \eqref{|||Piu-Phiwh|||}, we obtain
\begin{eqnarray*}
|\| \Pi_h u - \Phi_h(w_h)\||_h \lesssim C_1 h^{r} (\ln h)^{1/2
}  \cdot \left( \delta_h  + \dfrac{1}{C_1} \delta_h \right)  +  \dfrac{1}{C_1} \delta_h.
\end{eqnarray*}
Assume that the constant of the above $\lesssim$ is $C$, then we have
$$
|\| \Pi_h u - \Phi_h(w_h)\||_h \leq C\left[  C_1 h^{r} (\ln h)^{1/2
}  \cdot \left( 1 + \dfrac{1}{C_1}  \right)  +  \dfrac{1}{C_1}  \right]\delta_h.
$$
It is easy to prove that $\Phi_h (w_h) \subset B_h$ by proving $ C\left[  C_1 h^{r} (\ln h)^{1/2} \cdot \left( 1 + 1/C_1  \right)  +  1/C_1  \right] \leq 1$.
In fact, we can first take $C_1$ sufficiently large such that $C/C_1 <1$,
and then take $h$ sufficiently small such that $ C\left[  C_1 h^{r} (\ln h)^{1/2} \cdot \left( 1 + 1/C_1  \right)  +  1/C_1  \right] \leq 1$.
\end{proof}

\begin{lemma}\label{Lem:Phicontinuity}
Assume that the operator $\Phi_h$ and function space $B_h$ are defined in \eqref{Eqn:au-Phiwh} and \eqref{Eqn:Bdelta}, respectively, then the operator $\Phi_h$ is continuous in $B_h$.
\end{lemma}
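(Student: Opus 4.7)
The plan is to establish Lipschitz continuity of $\Phi_h$ on $B_h$ with respect to the DG norm, which of course implies continuity. I would start by taking two arbitrary elements $w_h^{(1)}, w_h^{(2)} \in B_h$, writing the defining identity \eqref{Eqn:au-PhiwhLS} for each, and subtracting. Since the terms $a_h(u, v_h) - (f(u), v_h)$ cancel, this yields, for every $v_h \in V_h$,
\begin{equation*}
b_h\bigl(\Phi_h(w_h^{(2)}) - \Phi_h(w_h^{(1)}),\, v_h\bigr) = \bigl(f(w_h^{(2)}) - f(w_h^{(1)}),\, v_h\bigr) - \bigl(f_u(u)(w_h^{(2)} - w_h^{(1)}),\, v_h\bigr).
\end{equation*}

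Next, I would test this identity with the natural choice $v_h = \Phi_h(w_h^{(2)}) - \Phi_h(w_h^{(1)}) \in V_h$ and invoke the coercivity \eqref{Eqn:positive} of $b_h$ to bound $|\|v_h\||_h^2$ from above by the right-hand side. In contrast to the proof of Lemma \ref{Lem:PhiBsubsetB}, where a second-order Taylor expansion around $u$ was essential in order to extract an extra factor of $h^r$ and conclude the self-mapping property, here I only need continuity, so the mean value theorem combined with the boundedness of $f_u$ from Assumption \ref{Assf} gives the pointwise bound $|f(w_h^{(2)}) - f(w_h^{(1)})| \lesssim |w_h^{(2)} - w_h^{(1)}|$. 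The second term is controlled similarly because $f_u(u)$ is bounded.

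Combining, the Cauchy--Schwarz inequality gives
\begin{equation*}
|\|\Phi_h(w_h^{(2)}) - \Phi_h(w_h^{(1)})\||_h^2 \lesssim \|w_h^{(2)} - w_h^{(1)}\|_0 \, \|v_h\|_0,
\end{equation*}
after which I would invoke Lemma \ref{Lem:L2leq|||} on both factors and divide through by $|\|v_h\||_h$ to arrive at
\begin{equation*}
|\|\Phi_h(w_h^{(2)}) - \Phi_h(w_h^{(1)})\||_h \lesssim |\|w_h^{(2)} - w_h^{(1)}\||_h,
\end{equation*}
which establishes Lipschitz continuity of $\Phi_h$ on $B_h$ and completes the proof. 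I do not expect a substantial obstacle: the argument is considerably lighter than that of Lemma \ref{Lem:PhiBsubsetB}, as the coercivity of $b_h$ together with the $L^2$--DG comparison already proved, and the boundedness of $f_u$, are sufficient; neither the logarithmic inverse estimate of Lemma \ref{Lem:inversepromote} nor any smallness of $h$ is required. The only mild care needed is to verify that the constants hidden in $\lesssim$ do not degenerate — they depend only on the coercivity/continuity constants of $b_h$, the uniform $L^\infty$ bound on $f_u$, and the constant in Lemma \ref{Lem:L2leq|||}, all of which are independent of the particular choice of $w_h^{(1)}, w_h^{(2)} \in B_h$.
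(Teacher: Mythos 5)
Your proof is correct, but it takes a genuinely lighter route than the paper's. Both arguments start identically: subtract the two defining identities, test with $v_h=\Phi_h(w_h^{(2)})-\Phi_h(w_h^{(1)})$, and invoke the coercivity \eqref{Eqn:positive} of $b_h$. The divergence is in how the right-hand side $(f(w_h^{(2)})-f(w_h^{(1)}),v_h)-(f_u(u)(w_h^{(2)}-w_h^{(1)}),v_h)$ is estimated. You bound the two terms separately via the mean value theorem and the uniform bound on $f_u$, obtaining a Lipschitz constant of order one depending only on the coercivity constant, $\sup|f_u|$ and Lemma \ref{Lem:L2leq|||}; this is all that Brouwer's theorem requires, and it avoids the inverse inequality, the bound \eqref{Eqn:wh-uleqhh}, and any appeal to $w_h^{(i)}\in B_h$ or to smallness of $h$. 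The paper instead exploits the cancellation between the two terms: writing $f(w_h^{(2)})-f(w_h^{(1)})=f_u(w_h^{(1)}+\kappa_1(w_h^{(2)}-w_h^{(1)}))(w_h^{(2)}-w_h^{(1)})$ and expanding $f_u(u)$ about the same intermediate point leaves only a second-order remainder involving $f_{uu}$, which is then estimated with the inverse inequality $\|\cdot\|_{0,\infty}\lesssim h^{-d/2}\|\cdot\|_0$ and the bound $\|u-w_h^{(i)}\|_0\lesssim C_1h^r\|u\|_{r+1}$ to yield a Lipschitz constant of order $C_1h^{r-d/2}\|u\|_{r+1}$. That sharper, $h$-dependent constant is not actually needed anywhere in the paper (uniqueness is proved separately from the monotonicity $f_u\le 0$), so your simpler argument is fully adequate and arguably preferable; the paper's version would only pay off if one wanted a contraction-mapping (Banach) argument. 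The one point worth stating explicitly in a final write-up is the trivial case $|\|\Phi_h(w_h^{(2)})-\Phi_h(w_h^{(1)})\||_h=0$ before dividing by that quantity.
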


\begin{proof}
For any $w_1, w_2 \in B_h$, by \eqref{Eqn:positive} and \eqref{Eqn:au-Phiwh}, we have
\allowdisplaybreaks
\begin{eqnarray} \nonumber
\lefteqn{|\| \Phi_h(w_2) - \Phi_h(w_1) \||_h^2}
\\ \nonumber
&\lesssim& a_h(\Phi_h(w_2) - \Phi_h(w_1), \Phi_h(w_2) - \Phi_h(w_1))
\\ \nonumber
&&- (f_u(u)(\Phi_h(w_2) - \Phi_h(w_1)), \Phi_h(w_2) - \Phi_h(w_1))
%
\\ \nonumber
&=& - a_h(u - \Phi_h(w_2), \Phi_h(w_2) - \Phi_h(w_1)) +  (f_u(u)(u - \Phi_h(w_2)), \Phi_h(w_2) - \Phi_h(w_1))
\\ \nonumber
&& + a_h(u - \Phi_h(w_1), \Phi_h(w_2) - \Phi_h(w_1)) -  (f_u(u)(u - \Phi_h(w_1)), \Phi_h(w_2) - \Phi_h(w_1))
\\ \nonumber
&=& - \left[ (f(u) -f(w_2) -f_u(u)(u-w_2), \Phi_h(w_2) - \Phi_h(w_1)) ) \right]
\\  \nonumber
&&+ \left[ (f(u) - f(w_1) - f_u(u)(u-w_1) , \Phi_h(w_2) - \Phi_h(w_1))  \right]
%
\\ \nonumber
&=& (f(w_2) - f(w_1), \Phi_h(w_2) - \Phi_h(w_1))
\\  \label{Eqn:PhihwLS}
&& + (f_u(u)(w_1 - w_2), \Phi_h(w_2) - \Phi_h(w_1)).
\end{eqnarray}

Using Taylor expansion, it is obtained that
\begin{eqnarray}\label{Eqn:fw2fw1TL}
f(w_2)  &=& f(w_1)+  f_u(w_1 + \kappa_1 (w_2 - w_1))(w_2 - w_1),
\\ \label{Eqn:fuufuwTL}
f_u(u) &=& f_u(w_1 + \kappa_1 (w_2 - w_1)) + f_{uu}(\theta_2) (u-w_1 - \kappa_1 (w_2 - w_1)),
\end{eqnarray}
where $\kappa_1 \in (0, 1)$, $\theta_2$ is between $u$ and $w_1 + \kappa_1 (w_2 - w_1)$.

Using \eqref{Eqn:PhihwLS}, Taylor expansion \eqref{Eqn:fw2fw1TL} and \eqref{Eqn:fuufuwTL}, Cauchy inequality, inverse inequality, \eqref{Eqn:L2leq|||} and \eqref{Eqn:wh-uleqhh}, we have
\begin{eqnarray*}
\lefteqn{|\| \Phi_h(w_2) - \Phi_h(w_1) \||_h^2}
\\
&=& (f_u(w_1 + \kappa_1 (w_2 - w_1)) (w_2 - w_1), \Phi_h(w_2) - \Phi_h(w_1))
\\
&& - (f_u(u)(w_2 - w_1), \Phi_h(w_2) - \Phi_h(w_1))
%
\\
&=& - (f_{uu}(\theta_2) (u-w_1 - \theta_2 (w_2 - w_1)) \cdot (w_2 - w_1), \Phi_h(w_2) - \Phi_h(w_1))
%
\\
&\lesssim& \|w_2-w_1\|_{0, \infty} \cdot \| u-w_1 - \theta_2 (w_2 - w_1)\|_{0} \cdot  \|\Phi_h(w_2) - \Phi_h(w_1)\|_0
\\
&\lesssim&  h^{-d/2}\| w_2-w_1 \|_0  \cdot (\theta_2 \| u -w_2\|_{0} + (1- \theta_2) \| u -w_1\|_{0} ) \cdot |\| \Phi_h(w_2) - \Phi_h(w_1) \||_h
%
\\
&\lesssim&
 h^{-d/2} |\| w_2-w_1 \||_h  \cdot C_1 h^r \|u\|_{r+1} \cdot |\| \Phi_h(w_2) - \Phi_h(w_1) \||_h
%
\\
&\lesssim&
C_1h^{r -d/2} \|u\|_{r+1} \cdot |\| w_2-w_1 \||_h  \cdot |\| \Phi_h(w_2) - \Phi_h(w_1) \||_h.
\end{eqnarray*}
From the above equation and the arbitrariness of $w_1, w_2 \in B_h$, we know that $\Phi_h$ is continuous.
\end{proof}

\begin{theorem}\label{Eqn:uhexistunique}
Assume $r \geq d/2$, then when $h$ is small enough, the discrete variational problem \eqref{Eqn:Varuh} exists a unique solution $u_h$.
\end{theorem}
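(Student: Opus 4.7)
The plan is to view Theorem \ref{Eqn:uhexistunique} as the payoff of the three preceding lemmas, applying a fixed point argument to the operator $\Phi_h$ defined by \eqref{Eqn:au-Phiwh} on the ball $B_h$ of \eqref{Eqn:Bdelta}. Because \eqref{Eqn:Varuh} and \eqref{Eqn:au-uh} are equivalent, a solution $u_h$ of the discrete problem is precisely a fixed point of $\Phi_h$, so existence and uniqueness of $u_h$ reduces to existence and uniqueness of a fixed point.

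For existence, I would collect the ingredients: (i) $B_h$ is a non-empty, convex, compact subset of the finite dimensional space $V_h$, as already verified immediately after \eqref{Eqn:Bdelta}; (ii) Lemma \ref{Lem:PhiBsubsetB} gives $\Phi_h(B_h)\subset B_h$ provided $r\geq d/2$ and $h$ is small enough; (iii) Lemma \ref{Lem:Phicontinuity} gives continuity of $\Phi_h$ on $B_h$. Brouwer's fixed point theorem then yields some $u_h\in B_h$ with $\Phi_h(u_h)=u_h$, which, by the definition of $\Phi_h$, satisfies \eqref{Eqn:au-uh} and hence \eqref{Eqn:Varuh}.

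For uniqueness, I would revisit the estimate established inside the proof of Lemma \ref{Lem:Phicontinuity}, namely
\begin{equation*}
|\|\Phi_h(w_2)-\Phi_h(w_1)\||_h \;\lesssim\; C_1\, h^{r-d/2}\,\|u\|_{r+1}\;|\|w_2-w_1\||_h, \qquad \forall\, w_1,w_2\in B_h.
\end{equation*}
Under the hypothesis $r\geq d/2$, the factor $C_1 h^{r-d/2}\|u\|_{r+1}$ can be made strictly less than $1$ by choosing $h$ sufficiently small (after the choice of $C_1$ made in Lemma \ref{Lem:PhiBsubsetB}), so $\Phi_h$ is a contraction on $B_h$. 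Suppose $u_h^{(1)}$ and $u_h^{(2)}$ are two solutions of \eqref{Eqn:Varuh}; then both are fixed points of $\Phi_h$, both lie in $B_h$ (by the same $B_h$-invariance argument applied with $w_h=u_h^{(i)}$), and the contraction estimate forces $|\|u_h^{(1)}-u_h^{(2)}\||_h=0$, hence $u_h^{(1)}=u_h^{(2)}$.

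The main obstacle I anticipate is really the uniqueness step rather than existence, since Brouwer's theorem by itself gives no uniqueness. The delicate point is ensuring that the contraction factor is genuinely smaller than $1$: this requires the combination of the mesh assumption $r\geq d/2$ (so that $h^{r-d/2}$ is at worst bounded as $h\to 0$) and the smallness of $h$, and it also requires that any candidate solution actually lie in the ball $B_h$ so that the bound from Lemma \ref{Lem:Phicontinuity} applies. Both of these are handled by the same argument already used to prove $\Phi_h(B_h)\subset B_h$, so the proof should reduce to a short invocation of Brouwer's theorem followed by the contraction-type uniqueness argument outlined above.
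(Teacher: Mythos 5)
Your existence argument coincides with the paper's: Brouwer's fixed point theorem applied to $\Phi_h$ on the non-empty compact convex set $B_h$, using Lemmas \ref{Lem:PhiBsubsetB} and \ref{Lem:Phicontinuity}. The uniqueness step, however, has two genuine gaps.

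First, the contraction constant. The bound from Lemma \ref{Lem:Phicontinuity} is
$|\|\Phi_h(w_2)-\Phi_h(w_1)\||_h \lesssim C_1 h^{\,r-d/2}\|u\|_{r+1}\,|\|w_2-w_1\||_h$,
and the theorem only assumes $r\geq d/2$. In the borderline case $r=d/2$ (e.g.\ $d=2$, $r=1$, the most common situation) the factor $h^{\,r-d/2}$ equals $1$, so the prefactor is $C\,C_1\|u\|_{r+1}$ with $C_1$ a constant that was deliberately chosen \emph{large} in Lemma \ref{Lem:PhiBsubsetB}; it cannot be driven below $1$ by shrinking $h$. Your claim that boundedness of $h^{\,r-d/2}$ suffices is not enough: a contraction needs the whole constant strictly less than $1$.

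Second, the scope of uniqueness. A contraction on $B_h$ only rules out two distinct fixed points \emph{inside} $B_h$, whereas the theorem asserts uniqueness in all of $V_h$. Your patch --- that any solution lies in $B_h$ ``by the same $B_h$-invariance argument'' --- does not work: Lemma \ref{Lem:PhiBsubsetB} shows $\Phi_h(w_h)\in B_h$ only for $w_h\in B_h$ (its proof uses $|\|w_h-\Pi_h u\||_h\leq\delta_h$ in deriving \eqref{Eqn:wh-uleqhh}), so it says nothing about a fixed point that is not already known to lie in $B_h$. The paper closes both gaps at once with a simpler, global monotonicity argument: if $u_1,u_2\in V_h$ both solve \eqref{Eqn:Varuh}, subtracting the two equations and Taylor-expanding gives $a_h(u_1-u_2,v_h)=(f_u(\theta)(u_1-u_2),v_h)$; taking $v_h=u_1-u_2$ and invoking the coercivity \eqref{Eqn:positive} together with $f_u\leq 0$ from Assumption \ref{Assf} yields $|\|u_1-u_2\||_h^2\lesssim a_h(u_1-u_2,u_1-u_2)-(f_u(\theta)(u_1-u_2),u_1-u_2)=0$, hence $u_1=u_2$ in all of $V_h$, with no smallness condition and no restriction to $B_h$. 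You should replace your contraction step by this argument.
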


\begin{proof}
Making use of Lemmas \ref{Lem:PhiBsubsetB} and \ref{Lem:Phicontinuity}, Brouwer fixed point theorem, we know that the discrete variational problem \eqref{Eqn:Varuh} exists at least one solution $u_h$ in space $B_h$.

Next, we prove that the discrete variational problem \eqref{Eqn:Varuh} has only a unique solution in the finite element space $V_h$.

Assume $u_1$ and $u_2$ are both the solutoin of \eqref{Eqn:Varuh}, using the Taylor expansion, we have
$$
a_h(u_1 - u_2, v_h) = (f(u_1) - f(u_2), v_h) = (f_u (\theta)(u_1 - u_2), v_h),
$$
where $\theta$ is between $u_1$ and $u_2$.

Taking $v_h = u_1 - u_2$ in the above equation, and using \eqref{Eqn:positive}, we have
$$
|\| u_1 - u_2\||_h^2 \lesssim a_h(u_1 - u_2, u_1 - u_2) - (f_u (\theta)(u_1 - u_2), u_1 - u_2) = 0.
$$
Therefore $u_1 = u_2$.
Because $B_h \subset V_h$, the discrete variational problem \eqref{Eqn:Varuh} exists unique solution $u_h$ in finite element space $V_h$, and $u_h \in B_h$.
\end{proof}

\section{Optimal priori error estimates}\setcounter{equation}{0}\label{Sec:error}

In this section, the optimal priori error estimates of the discrete variational problem \eqref{Eqn:Varuh} will be proved.

\subsection{Projection operator $P_h$ and its properties}\label{Sec:Ph}

A new projection operator needs to be introduced firstly before we provide the errors estimates.

Set $s>\frac{1}{2}$, we define a projection operator $P_h: H^{1+s}(\mathcal{T}_h) \rightarrow V_h$ by
\begin{equation}\label{Eqn:Ph}
a_h(P_h w, v_h) = a_h(w, v_h), \quad \forall\  w \in H^{1+s}(\mathcal{T}_h) , v_h \in V_h.
\end{equation}

By using Lemma \ref{Lem:ahleqgeq} and Lax-Milgram theorem, we know that $P_h$ is well defined, namely, for any $w\in H^{1+s}(\mathcal{T}_h)$, there exist a unique $P_h w \in V_h$ that satisfies \eqref{Eqn:Ph}.
And the projection operator $P_h$ satisfies the following estimate.
\begin{lemma}\label{Lem:u-Phu}
For any $w\in H^{r+1}(\Omega) ( r\geq 1 )$, we have
\begin{equation}\label{Eqn:u-Phu}
\| w - P_hw \|_{0, \Omega} \lesssim h^{r+1} |w|_{r+1, \Omega}.
\end{equation}
\end{lemma}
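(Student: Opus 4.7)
The plan is to run an Aubin--Nitsche duality argument. The first preparatory step is to establish the intermediate DG-norm estimate
\[
|\| w - P_h w \||_h \lesssim h^r |w|_{r+1}.
\]
To do this, I would split $w - P_h w = (w - \Pi_h w) + (\Pi_h w - P_h w)$, bound the first term by Lemma \ref{Lem:u-Pihu|||h}, and treat the discrete second term via the coercivity \eqref{Eqn:ahgeq}, the Galerkin orthogonality $a_h(w - P_h w, v_h) = 0$ (taking $v_h = \Pi_h w - P_h w \in V_h$), and the continuity \eqref{Eqn:ahleq}: this yields $|\|\Pi_h w - P_h w\||_h^2 \lesssim a_h(\Pi_h w - w, \Pi_h w - P_h w) \lesssim |\|w - \Pi_h w\||_h \cdot |\|\Pi_h w - P_h w\||_h$, and the interpolation estimate closes it.

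Next I would introduce the dual problem: find $\phi \in H_0^1(\Omega) \cap H^2(\Omega)$ with $-\Delta \phi = w - P_h w$ in $\Omega$, using the standard $H^2$-regularity $\|\phi\|_2 \lesssim \|w - P_h w\|_0$ (as in the proof of Lemma \ref{Lem:L2leq|||}). The key identity is that for this smooth $\phi$ one has
\[
a_h(w - P_h w, \phi) = \| w - P_h w \|_0^2.
\]
To derive it, note that $[\phi] = 0$ on every edge and $[\nabla \phi] = 0$ on interior edges because $\phi \in H_0^1(\Omega) \cap H^2(\Omega)$; so two of the four terms in $a_h$ drop out, and after elementwise integration by parts in the remaining volume term and an application of Lemma \ref{Lem:int} to reassemble the boundary contributions, the surviving $\{\nabla\phi\}\cdot[w - P_h w]$ term cancels its twin, leaving only $-(w - P_h w, \Delta \phi) = \|w - P_h w\|_0^2$.

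Combining this identity with Galerkin orthogonality applied to $\Pi_h \phi \in V_h$ gives
\[
\| w - P_h w \|_0^2 = a_h(w - P_h w, \phi - \Pi_h \phi),
\]
and continuity of $a_h$ then yields $\| w - P_h w \|_0^2 \lesssim |\|w - P_h w\||_h \cdot |\|\phi - \Pi_h \phi\||_h$. Applying Lemma \ref{Lem:u-Pihu|||h} with $r = 1$ to $\phi$ gives $|\|\phi - \Pi_h \phi\||_h \lesssim h\,\|\phi\|_2 \lesssim h\,\|w - P_h w\|_0$, while the intermediate step above gives the $h^r|w|_{r+1}$ factor; dividing through by $\|w - P_h w\|_0$ yields \eqref{Eqn:u-Phu}.

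The main obstacle is the verification of the duality identity $a_h(w - P_h w, \phi) = \|w - P_h w\|_0^2$. The consistency lemma (Lemma \ref{Lem:uWW}) as stated applies to test functions in $H_0^1(\Omega) \cap H^2(\mathcal{T}_h)$, which $w - P_h w$ is not (it is discontinuous), so the identity cannot be quoted directly and must be verified by hand using symmetry of $a_h$, the smoothness of the \emph{first} argument $\phi$, and a careful bookkeeping of the face integrals — everything else is a routine assembly of previously established lemmas.
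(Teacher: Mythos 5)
Your proof is correct, but it takes a different (and more self-contained) route than the paper. The paper's own proof is essentially a reduction plus a citation: since $w\in H^{r+1}(\Omega)$ with $r\ge 1$, one sets $F=-\Delta w\in L^2(\Omega)$, observes via the consistency computation \eqref{Eqn:deltauv} that $a_h(w,v_h)=(F,v_h)$ for all $v_h\in V_h$, concludes from the definition \eqref{Eqn:Ph} that $P_hw$ is exactly the SIPDG solution of the \emph{linear} Poisson problem with datum $F$, and then quotes the standard $L^2$ error estimate for that linear problem from Section 2.3 of \cite{ZhangT12Book}. You instead unfold that quoted result: your Step 1 (C\'ea-type argument via Galerkin orthogonality, coercivity \eqref{Eqn:ahgeq} and continuity \eqref{Eqn:ahleq}) and your duality argument with the identity $a_h(w-P_hw,\phi)=\|w-P_hw\|_0^2$ are precisely the ingredients of the cited proof, and your verification of that identity — using $[\phi]=0$ on all faces, $[\nabla\phi]=0$ on interior faces, elementwise integration by parts and Lemma \ref{Lem:int} — is sound and correctly avoids misapplying Lemma \ref{Lem:uWW} to a discontinuous test function. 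Both routes rest on the same unstated $H^2$ elliptic regularity hypothesis (as in the proof of Lemma \ref{Lem:L2leq|||}); yours buys self-containedness at the cost of length, the paper's buys brevity at the cost of outsourcing the core estimate. One cosmetic remark: your argument (like the paper's own) delivers the full norm $\|w\|_{r+1}$ on the right-hand side rather than the seminorm $|w|_{r+1}$ appearing in the lemma statement, which is harmless for the way the lemma is used later.
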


\begin{proof}
Since $w \in H^{r+1}(r \geq 1)$, there exist a function $F \in L^2(\Omega)$ such that $-\Delta w=F$.
Similar to the derivation of \eqref{Eqn:deltauv}, we can prove that $a_h(w, v_h) = (F, v_h)$.
By the definition of $P_h$, the coerciveness and continuity of $a_h(\cdot, \cdot)$, we can prove that $P_h w$ is the unique solution of the following problem:
$$
\left\{\begin{aligned}
&\text{Find}\ w_h \in V_h\ \text{such that}
\\
&a_h(w_h, v_h) = (F, v_h), \quad \forall v_h \in V_h.
\end{aligned}\right.
$$
Then, the finite element solution $P_h w$ and $w$ have the following error estimates (detailed proof can be found in Section 2.3 of \cite{ZhangT12Book})
$$
\|w-P_hw\|_0 \leq C h^{r+1} \|w\|_{r+1}.
$$
\end{proof}

In order to give the approximation of the projection operator $P_h$, the following two preparatory lemmas should be  introduced.

We define function space
\begin{equation}\label{Eqn:Bprime}
B^\prime = \{ v_h \in V_h : \| P_h u - v_h \|_0 \leq \eta \},
\end{equation}
where the projection operator $P_h$ is defined in \eqref{Eqn:Ph}, $\eta = C_0 \| P_h u - u\|_0$, $C_0 > 1$ is a constant that can be sufficiently large and does not depend on the mesh size.
Similar to the space $B_h$ in \eqref{Eqn:Bdelta}, we can also prove that $B^\prime$ is a non-empty compact convex subset.

\begin{lemma}\label{Lem:PhiBpsubsetBp}
Assume that the operator $\Phi_h$ and function space $B^\prime$ are given in \eqref{Eqn:au-Phiwh} and \eqref{Eqn:Bprime}, respectively, and $r \geq d/2$, then when $h$ is small enough, we have $\Phi_h(B^\prime) \subset B^\prime$.
\end{lemma}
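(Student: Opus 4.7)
The plan is to mirror the argument of Lemma~\ref{Lem:PhiBsubsetB} but in the $L^2$ norm, exploiting the sharper $O(h^{r+1})$ approximation rate of $P_h$ via an Aubin--Nitsche-type duality argument. Setting $e_h := P_h u - \Phi_h(w_h) \in V_h$, I would first derive an error equation: subtracting the Galerkin orthogonality $a_h(u - P_h u, v_h) = 0$ granted by \eqref{Eqn:Ph} from the definition \eqref{Eqn:au-Phiwh} of $\Phi_h$, and expanding $f(u) - f(w_h) - f_u(u)(u - w_h) = \tfrac{1}{2} f_{uu}(\theta)(u - w_h)^2$ by Taylor's formula, I arrive at
\begin{equation*}
a_h(e_h, v_h) - (f_u(u) e_h, v_h) = \bigl(\tfrac{1}{2} f_{uu}(\theta)(u - w_h)^2,\, v_h\bigr) + (f_u(u)(u - P_h u),\, v_h), \quad \forall\, v_h \in V_h.
\end{equation*}

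Next I would introduce the adjoint problem: find $\psi \in H_0^1(\Omega) \cap H^2(\Omega)$ with $-\Delta \psi - f_u(u) \psi = e_h$. Assumption~\ref{Assf} makes the operator uniformly elliptic, so standard $H^2$-regularity supplies $\|\psi\|_2 \lesssim \|e_h\|_0$. Arguing as in Lemma~\ref{Lem:L2leq|||}, and noting that the smoothness of $\psi$ makes the jumps $[\psi]$ and $[\nabla\psi]$ vanish on interior edges, I identify $\|e_h\|_0^2 = a_h(e_h, \psi) - (f_u(u) e_h, \psi)$. Symmetry of $a_h$ combined with \eqref{Eqn:Ph} yields the Galerkin orthogonality $a_h(e_h, \psi - P_h \psi) = 0$, so substituting $v_h = P_h \psi$ into the error equation above gives
\begin{equation*}
\|e_h\|_0^2 = \bigl(\tfrac{1}{2} f_{uu}(\theta)(u - w_h)^2,\, P_h \psi\bigr) + (f_u(u)(u - P_h u),\, P_h \psi) - (f_u(u) e_h,\, \psi - P_h \psi).
\end{equation*}

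The remaining task is to estimate these three terms. The third is absorbed on the left for $h$ small: Lemma~\ref{Lem:u-Phu} applied to $\psi \in H^2$ gives $\|\psi - P_h \psi\|_0 \lesssim h^2 \|\psi\|_2 \lesssim h^2 \|e_h\|_0$, contributing $\lesssim h^2 \|e_h\|_0^2$. The middle term is handled by Lemma~\ref{Lem:u-Phu} applied to $u$ together with boundedness of $f_u(u)$, yielding $\lesssim \|u - P_h u\|_0 \|e_h\|_0 = (\eta/C_0)\|e_h\|_0$. The quadratic term is the crux: I need to control $\|u - w_h\|_0 \cdot \|u - w_h\|_{0,\infty}$. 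Its $L^2$ factor splits by the triangle inequality as $\|u - w_h\|_0 \leq (1 + 1/C_0)\eta$; for the $L^\infty$ factor I would split once more and invoke the inverse inequality of Lemma~\ref{Lem:inverse} to obtain $\|P_h u - w_h\|_{0,\infty} \lesssim h^{-d/2}\eta \lesssim h^{r+1 - d/2}\|u\|_{r+1}$, together with a standard $L^\infty$ approximation bound on $\|u - P_h u\|_{0,\infty}$ of comparable order, both of which stay small precisely when $r \geq d/2$.

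Assembling these estimates gives a bound of the form $\|e_h\|_0 \leq C_\star [1/C_0 + (1 + 1/C_0)\, h^{r+1-d/2}]\eta + C h^2 \|e_h\|_0$, and I then repeat the two-stage choice of Lemma~\ref{Lem:PhiBsubsetB}: first take $C_0$ large enough that $C_\star/C_0$ is strictly less than $1/2$, then take $h$ small enough that the remaining $h$-dependent terms are likewise absorbed. This forces $\|P_h u - \Phi_h(w_h)\|_0 \leq \eta$ and hence $\Phi_h(w_h) \in B^\prime$. The main obstacle I foresee is the uniform-in-$w_h$ control of $\|u - w_h\|_{0,\infty}$; without the hypothesis $r \geq d/2$, the inverse-inequality factor $h^{-d/2}$ would overwhelm the $L^2$ rate of $P_h u - w_h$ and block the absorption step.
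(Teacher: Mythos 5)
Your argument is essentially correct, but it follows a genuinely different route from the paper. The paper never introduces an adjoint problem: it starts from the discrete embedding $\|e_h\|_0 \lesssim |\|e_h\||_h$ of Lemma \ref{Lem:L2leq|||}, tests the error equation with $e_h = P_h u - \Phi_h(w_h)$ itself, and invokes the coercivity \eqref{Eqn:positive} of $b_h(\cdot,\cdot)$; the only new ingredient relative to Lemma \ref{Lem:PhiBsubsetB} is that the term $a_h(P_h u - u, e_h)$ drops out by the definition \eqref{Eqn:Ph} of $P_h$, leaving exactly your right-hand side (quadratic Taylor remainder plus $(f_u(u)(u-P_hu), e_h)$), after which the paper says ``argue as in Lemma \ref{Lem:PhiBsubsetB}'' using the discrete Sobolev inequality of Lemma \ref{Lem:inversepromote}. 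The point is that $P_h u - \Phi_h(w_h)$ is a difference of two discrete functions whose data is already of size $O(h^{r+1})$ (namely $\|P_hu-u\|_0$ and the quadratic remainder), so no Aubin--Nitsche pairing is needed to gain the extra power of $h$; the duality step in your proof buys nothing here. What your route costs is two extra ingredients not present in the paper: $H^2$ regularity for the perturbed adjoint operator $-\Delta - f_u(u)$ (consistent in spirit with the paper's assumption \eqref{Eqn:w2leqg0} for $-\Delta$, but still an additional hypothesis on a general polygon), and an $L^\infty$ estimate for $u - P_h u$, which the paper never states --- though you can manufacture it by inserting $\Pi_h u$, using \eqref{Eqn:u-Pihu} together with the inverse inequality \eqref{Eqn:inverse} applied to $\Pi_h u - P_h u$ and the $L^2$ bounds, yielding the $O(h^{r+1-d/2})$ rate you assert. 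With those two points made explicit, your error identity, the absorption of the $(f_u(u)e_h,\psi-P_h\psi)$ term for small $h$, and the two-stage choice (first $C_0$, then $h$) are all sound; just note that the $h$-dependent coefficient in your final bound actually carries a factor $C_0+1$ rather than $1+1/C_0$, which is harmless precisely because $C_0$ is fixed before $h$ is sent to zero.
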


\begin{proof}
For any $w_h \in B^\prime$, by Lemma \ref{Lem:L2leq|||}, \eqref{Eqn:positive}, \eqref{Eqn:Ph}, \eqref{Eqn:au-Phiwh} and Assumption \ref{Eqn:Assf}, we have
\allowdisplaybreaks
\begin{eqnarray}\nonumber
\lefteqn{\| P_h u - \Phi_h(w_h)\|_0^2  \leq |\| P_h u - \Phi_h(w_h)\||_h^2
}
\\ \nonumber
&\lesssim& a_h(P_h u - \Phi_h(w_h),P_h u - \Phi_h(w_h)) - (f_u(u)(P_h u - \Phi_h(w_h)), P_h u - \Phi_h(w_h))
%
\\ \nonumber
&=&
a_h(u - \Phi_h(w_h),P_h u - \Phi_h(w_h)) - (f_u(u)(u - \Phi_h(w_h)), P_h u - \Phi_h(w_h))
\\ \nonumber
&&+ a_h(P_h u - u,P_h u - \Phi_h(w_h)) - (f_u(u)(P_h u - u), P_h u - \Phi_h(w_h))
%
\\ \nonumber
&=&
a_h(u - \Phi_h(w_h),P_h u - \Phi_h(w_h)) - (f_u(u)(u - \Phi_h(w_h)), P_h u - \Phi_h(w_h))
\\ \nonumber
&&- (f_u(u)(P_h u - u), P_h u - \Phi_h(w_h)) 
\\ \nonumber
&\lesssim& (f(u), P_h u - \Phi_h(w_h)) - (f(w_h), P_h u - \Phi_h(w_h))
\\ \nonumber 
&& - (f_u(u)(u-w_h), P_h u - \Phi_h(w_h)) + \| P_h u - u\|_0 \cdot \| P_h u - \Phi_h(w_h) \|_0.
\end{eqnarray}

From this, and similar to the proof of Lemma \ref{Lem:PhiBsubsetB}, the conclusion can be obtained.
\end{proof}

Similar to the proof of Lemma \ref{Lem:Phicontinuity}, the following lemma can also be obtained for space $B^\prime$, which is omitted here.
\begin{lemma}\label{Lem:Phi2continuity}
Assume that the operator $\Phi_h$ and the function space $B^\prime$ are defined in \eqref{Eqn:au-Phiwh} and \eqref{Eqn:Bprime}, respectively, then the operator $\Phi_h$ is continuous in space $B^\prime$.
\end{lemma}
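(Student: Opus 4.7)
The plan is to follow the proof of Lemma \ref{Lem:Phicontinuity} essentially line by line, with a single crucial substitution: instead of the DG-norm bound \eqref{Eqn:wh-uleqhh} on $\|w_h - u\|_0$ that is valid for $w_h \in B_h$, I would use the sharper $L^2$-norm bound available for $w_h \in B^\prime$ via the definition of $B^\prime$ and Lemma \ref{Lem:u-Phu}.

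First I would fix arbitrary $w_1, w_2 \in B^\prime$ and, exactly as in \eqref{Eqn:PhihwLS}, combine the coercivity estimate \eqref{Eqn:positive} with the defining identity \eqref{Eqn:au-Phiwh} applied at $w_h = w_1$ and $w_h = w_2$ to derive
\begin{equation*}
|\| \Phi_h(w_2) - \Phi_h(w_1) \||_h^2 \lesssim (f(w_2) - f(w_1), \Phi_h(w_2) - \Phi_h(w_1)) + (f_u(u)(w_1 - w_2), \Phi_h(w_2) - \Phi_h(w_1)).
\end{equation*}
Next I would apply the Taylor expansions \eqref{Eqn:fw2fw1TL} and \eqref{Eqn:fuufuwTL} to collapse the right-hand side into a single second-order remainder of the form $(f_{uu}(\theta_2)(u - w_1 - \kappa_1(w_2-w_1))(w_2-w_1), \Phi_h(w_2) - \Phi_h(w_1))$, so that the estimate is ultimately controlled by a product of three scalar factors.

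Then I would bound the three factors in turn: $\|w_2 - w_1\|_{0,\infty} \lesssim h^{-d/2} \|w_2 - w_1\|_0$ by the inverse inequality of Lemma \ref{Lem:inverse}; $\|\Phi_h(w_2) - \Phi_h(w_1)\|_0 \lesssim |\|\Phi_h(w_2) - \Phi_h(w_1)\||_h$ by Lemma \ref{Lem:L2leq|||}; and the middle factor by the new ingredient: for any $w_i \in B^\prime$, the triangle inequality, the definition \eqref{Eqn:Bprime} of $B^\prime$, and Lemma \ref{Lem:u-Phu} yield
\begin{equation*}
\|u - w_i\|_0 \leq \|u - P_h u\|_0 + \|P_h u - w_i\|_0 \leq (1 + C_0)\|u - P_h u\|_0 \lesssim C_0 h^{r+1} \|u\|_{r+1}.
\end{equation*}
Combining everything and transferring $\|w_2 - w_1\|_0$ to the DG norm by Lemma \ref{Lem:L2leq|||} gives
\begin{equation*}
|\| \Phi_h(w_2) - \Phi_h(w_1) \||_h \lesssim C_0 h^{r+1-d/2} \|u\|_{r+1} \, |\| w_2 - w_1 \||_h,
\end{equation*}
which is a Lipschitz bound with a finite constant (in fact one that vanishes as $h \to 0$ under the standing hypothesis $r \geq d/2$). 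Since $V_h$ is finite-dimensional, all norms on it are equivalent, so this Lipschitz estimate transfers to the $L^2$ topology used to define $B^\prime$, establishing continuity of $\Phi_h$ on $B^\prime$.

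No step presents a genuine obstacle, since every ingredient has already appeared in the proof of Lemma \ref{Lem:Phicontinuity}. The only substantive adjustment is that membership in $B^\prime$ (an $L^2$-based neighbourhood of $P_h u$) delivers the sharper factor $h^{r+1}$ where membership in $B_h$ (a DG-based neighbourhood of $\Pi_h u$) delivered only $h^r$; this merely strengthens the resulting Lipschitz constant and does not change the logical structure of the argument.
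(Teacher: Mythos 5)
Your proposal is correct and is essentially the argument the paper intends: the paper omits this proof entirely, stating only that it is ``similar to the proof of Lemma \ref{Lem:Phicontinuity},'' and your adaptation---replacing the bound \eqref{Eqn:wh-uleqhh} by the estimate $\|u-w_i\|_0 \leq (1+C_0)\|u-P_hu\|_0 \lesssim h^{r+1}\|u\|_{r+1}$ coming from the definition of $B^\prime$ and Lemma \ref{Lem:u-Phu}---is exactly the intended substitution. Your closing remark on norm equivalence in the finite-dimensional space $V_h$ correctly handles the fact that $B^\prime$ carries the $L^2$ topology while the Lipschitz bound is stated in the DG norm.
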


At the end of this subsection, by using Lemmas \ref{Lem:PhiBpsubsetBp} and \ref{Lem:Phi2continuity}, the following approximation of the projection operator $P_h$ can be obtained.
\begin{lemma}\label{Lem:uh-PhuL2}
Assume $u_h \in V_h$ and $u\in H_0^1(\Omega) \cap H^{r+1}(\Omega)$ are the solution of problem \eqref{Eqn:Varuh} and \eqref{Eqn:u}, respectively, $P_h$ is given in \eqref{Eqn:Ph}, then when $h$ is sufficiently small, we have
\begin{equation}\label{Eqn:uh-PhuL2}
\|u_h - P_h u \|_0 \leq  C_0 \|u - P_h u \|_0 \lesssim  h^{r+1} \|u\|_{r+1}.
\end{equation}
\end{lemma}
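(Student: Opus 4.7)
The plan is to mirror the argument used for existence of $u_h$ in $B_h$ (Theorem \ref{Eqn:uhexistunique}), but now working in the $L^2$-type neighbourhood $B'$ of $P_h u$ instead of the DG-norm neighbourhood of $\Pi_h u$. Lemmas \ref{Lem:PhiBpsubsetBp} and \ref{Lem:Phi2continuity} have already done the heavy lifting: they state exactly that $\Phi_h$ is a continuous self-map of $B'$ when $h$ is small enough and $r \geq d/2$.

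First I would verify (as remarked after \eqref{Eqn:Bprime}) that $B'$ is a non-empty compact convex subset of the finite-dimensional space $V_h$; the argument is identical to the one given for $B_h$ in Section \ref{Sec:well-posed}, replacing the DG norm by the $L^2$ norm and $\Pi_h u$ by $P_h u$. Then, invoking Brouwer's fixed point theorem on $\Phi_h : B' \to B'$, I obtain a fixed point $u_h^\star \in B'$, i.e.\ $\Phi_h(u_h^\star) = u_h^\star$. By the definition \eqref{Eqn:au-Phiwh} of $\Phi_h$, any fixed point of $\Phi_h$ is a solution of \eqref{Eqn:au-Phiwh}, which is equivalent to \eqref{Eqn:Varuh}. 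Theorem \ref{Eqn:uhexistunique} asserts that \eqref{Eqn:Varuh} has a unique solution in $V_h$, so necessarily $u_h^\star = u_h$, and therefore $u_h \in B'$.

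Membership of $u_h$ in $B'$ is precisely the bound
\[
\|u_h - P_h u\|_0 \leq \eta = C_0 \, \|u - P_h u\|_0,
\]
which is the first inequality in \eqref{Eqn:uh-PhuL2}. The second inequality is then immediate from Lemma \ref{Lem:u-Phu}, which gives $\|u - P_h u\|_0 \lesssim h^{r+1} \|u\|_{r+1}$.

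The only delicate point is the compatibility of uniqueness with the fact that $B'$ and $B_h$ are a priori different subsets of $V_h$: one must appeal to the global uniqueness in $V_h$ already established in Theorem \ref{Eqn:uhexistunique}, rather than merely to the Brouwer existence in each of $B_h$ and $B'$ separately. The rest of the argument is bookkeeping, since the analytical core — obtaining $\Phi_h(B') \subset B'$ under the $L^2$ norm, which requires using \eqref{Eqn:L2leq|||} together with the Taylor expansion of $f$ as in Lemma \ref{Lem:PhiBsubsetB} — has been dispatched by Lemma \ref{Lem:PhiBpsubsetBp}.
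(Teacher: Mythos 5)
Your proposal is correct and follows essentially the same route as the paper: apply Brouwer's fixed point theorem to $\Phi_h$ on $B'$ via Lemmas \ref{Lem:PhiBpsubsetBp} and \ref{Lem:Phi2continuity}, identify the resulting solution with $u_h$ through the global uniqueness of Theorem \ref{Eqn:uhexistunique}, and then read off the first inequality from the definition of $B'$ and the second from Lemma \ref{Lem:u-Phu}. The point you flag as delicate---that one must invoke uniqueness in all of $V_h$ rather than in $B_h$ or $B'$ separately---is exactly the step the paper also relies on.
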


\begin{proof}
First of all, from Lemma \ref{Lem:PhiBpsubsetBp}, Lemma \ref{Lem:Phi2continuity} and Brouwer fixed point theorem, we know that  \eqref{Eqn:Varuh} exists at least one solution in $B^\prime$.
The solution $u_h$ of the problem \eqref{Eqn:Varuh} is unique in $V_h$, which have been proved in Section \ref{Sec:well-posed}.
Therefore, the solution $u_h$ of \eqref{Eqn:Varuh} also belong to $B^\prime$. Then according to the definition of $B^\prime$, the first inequality of \eqref{Eqn:uh-PhuL2} is true..

Secondly, the second inequality of \eqref{Eqn:uh-PhuL2} can be proved by using the projection error estimation \eqref{Eqn:u-Phu}.
\end{proof}

\subsection{Priori error analysis}\label{Sec:prioruh}

In this subsection,  we will give the error estimate between the finite element solution $u_h$ of the problem \eqref{Eqn:Varuh} and the solution $u$ of the problem \eqref{Eqn:u}.

\begin{lemma}\label{Lem:u-uhleqinf}
Assume $u$ and $u_h$ are the solutions of \eqref{Eqn:u} and \eqref{Eqn:Varuh}, respectively, then we have
\begin{equation}\label{Eqn:u-uhleqinf}
|\| u-u_h\||_h \lesssim \inf_{v_h \in V_h} |\| u-v_h\||_h + \sup_{w_h \in V_h} \dfrac{|a_h(u, w_h) - a_h(u_h, w_h)|}{|\| w_h\||_h}.
\end{equation}
\end{lemma}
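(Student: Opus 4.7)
The plan is to establish a Strang-type estimate by combining the triangle inequality with the coercivity of $a_h(\cdot,\cdot)$ on $V_h$. The key observation is that although $u - u_h$ does not live in $V_h$, the difference $v_h - u_h$ does whenever $v_h \in V_h$, which is exactly the setting where the coercivity bound from Lemma \ref{Lem:ahleqgeq} applies.

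First I would fix an arbitrary $v_h \in V_h$ and write
\begin{equation*}
|\| u - u_h \||_h \leq |\| u - v_h \||_h + |\| v_h - u_h \||_h,
\end{equation*}
so that the task reduces to controlling $|\| v_h - u_h \||_h$ in terms of the two quantities on the right-hand side of \eqref{Eqn:u-uhleqinf}. Since $v_h - u_h \in V_h$, the coercivity estimate \eqref{Eqn:ahgeq} gives
\begin{equation*}
\tfrac{1}{4} |\| v_h - u_h \||_h^{2} \leq a_h(v_h - u_h, v_h - u_h).
\end{equation*}

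Next I would split the right-hand side by inserting $\pm u$ in the first argument:
\begin{equation*}
a_h(v_h - u_h, v_h - u_h) = a_h(v_h - u, v_h - u_h) + \bigl( a_h(u, v_h - u_h) - a_h(u_h, v_h - u_h) \bigr).
\end{equation*}
For the first term I would invoke the continuity bound \eqref{Eqn:ahleq}, giving
$|a_h(v_h - u, v_h - u_h)| \leq 3 |\| u - v_h \||_h \, |\| v_h - u_h \||_h$. For the second term, since $v_h - u_h \in V_h$, I would bound it by $\sup_{w_h \in V_h} \frac{|a_h(u, w_h) - a_h(u_h, w_h)|}{|\| w_h \||_h} \cdot |\| v_h - u_h \||_h$.

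Combining these two bounds and dividing through by $|\| v_h - u_h \||_h$ (handling the trivial case $v_h = u_h$ separately) yields
\begin{equation*}
|\| v_h - u_h \||_h \lesssim |\| u - v_h \||_h + \sup_{w_h \in V_h} \frac{|a_h(u, w_h) - a_h(u_h, w_h)|}{|\| w_h \||_h}.
\end{equation*}
Plugging this back into the triangle inequality and taking the infimum over $v_h \in V_h$ produces \eqref{Eqn:u-uhleqinf}. There is no real obstacle here; the only point requiring minor care is that Lemma \ref{Lem:ahleqgeq} provides continuity of $a_h$ on the broken space $H^{1+s}(\mathcal{T}_h)$ with $s \geq 1/2$, which covers both $u \in H_0^1(\Omega) \cap H^{r+1}(\Omega)$ (with $r \geq 1$) and elements of $V_h$, so the splitting above is justified.
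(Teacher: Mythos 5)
Your proposal is correct and follows essentially the same route as the paper's proof: coercivity of $a_h$ applied to $v_h-u_h\in V_h$, the splitting $a_h(v_h-u_h,\cdot)=a_h(v_h-u,\cdot)+\bigl(a_h(u,\cdot)-a_h(u_h,\cdot)\bigr)$, continuity for the first term, division by $|\|v_h-u_h\||_h$, and the triangle inequality with the infimum over $v_h$. Your explicit handling of the degenerate case $v_h=u_h$ and the remark on the regularity needed for continuity are minor refinements the paper leaves implicit.
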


\begin{proof}
Set $v_h \in V_h$. Using the coerciveness and continuity of $a_h(\cdot. \cdot)$, we obtain
\begin{eqnarray}\nonumber
|\| v_h - u_h\||_h^2 &\lesssim& a_h( v_h - u_h,  v_h - u_h)
%
\\ \nonumber
&=&  a_h( v_h - u,  v_h - u_h) + a_h( u - u_h,  v_h - u_h)
\\ \label{Eqn:vh-uh2}
&\lesssim& |\| v_h - u\||_h |\| v_h - u_h\||_h + a_h( u ,  v_h - u_h) - a_h( u_h,  v_h - u_h).  
\end{eqnarray}

Divide both sides of \eqref{Eqn:vh-uh2} by $|\| v_h - u_h\||_h$, we obatin
\begin{eqnarray}\nonumber
|\| v_h - u_h\||_h  &\lesssim& |\| v_h - u\||_h + \dfrac{a_h( u ,  v_h - u_h) - a_h( u_h,  v_h - u_h)}{|\| v_h - u_h\||_h}
\\ \nonumber 
&\lesssim&   |\| v_h - u\||_h + \sup\limits_{w_h \in V_h} \dfrac{|a_h( u ,  w_h) - a_h( u_h,  w_h)|}{|\| w_h\||_h}.
\end{eqnarray}

Using triangle inequality and above inequality, we have
\begin{eqnarray}\nonumber
|\| u-u_h\||_h  &\leq& |\| u-v_h\||_h  + |\| v_h-u_h\||_h
\\ \nonumber
&\lesssim& |\|  u - v_h\||_h + \sup\limits_{w_h \in V_h} \dfrac{|a_h( u ,  w_h) - a_h( u_h,  w_h)|}{|\| w_h\||_h}.
\end{eqnarray}
Further, using above inequality and the arbitrariness of $v_h \in V_h$, we complete the proof.
\end{proof}

The error estimate between the finite element solution $u_h$ and the solution $u$ of the problem \eqref{Eqn:u} under DG norm is given below.
\begin{theorem}\label{Lem:u-uh|||}
Assume $u$ and $u_h$ are the solutons of problem \eqref{Eqn:u} and \eqref{Eqn:Varuh}, respectively, then we have
\begin{equation}\label{Eqn:u-uh|||}
|\| u -u_h\||_h \lesssim h^r\|u\|_{r+1}.
\end{equation}
\end{theorem}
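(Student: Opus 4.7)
The plan is to apply the Strang-type estimate of Lemma \ref{Lem:u-uhleqinf}, namely
$$|\| u-u_h\||_h \lesssim \inf_{v_h \in V_h} |\| u-v_h\||_h + \sup_{w_h \in V_h} \dfrac{|a_h(u, w_h) - a_h(u_h, w_h)|}{|\| w_h\||_h},$$
and bound the two terms separately. For the approximation (infimum) term I would simply take $v_h = \Pi_h u$, the Scott--Zhang-type interpolant from \eqref{Eqn:u-Pihu}, and invoke Lemma \ref{Lem:u-Pihu|||h} to get $\inf_{v_h \in V_h} |\|u - v_h\||_h \leq |\|u - \Pi_h u\||_h \lesssim h^r \|u\|_{r+1}$.

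For the consistency (supremum) term I first need an identity of the form $a_h(u, w_h) = (f(u), w_h)$ for $w_h \in V_h$. Since $V_h \not\subset H_0^1(\Omega)$, Lemma \ref{Lem:uWW} does not apply directly, and this is the step that requires care. I would re-do the computation of \eqref{Eqn:deltauv} element-by-element: apply Green's formula on each $K$, use Lemma \ref{Lem:int} to rewrite the boundary terms as edge averages and jumps, and then exploit the regularity $u \in H_0^1(\Omega) \cap H^{r+1}(\Omega)$, which implies $[u]=0$ on every edge (interior or boundary, the latter because $u|_{\partial\Omega}=0$) and $[\nabla u]=0$ on interior edges. All the penalty and symmetrization terms in $a_h(u, w_h)$ that involve $[u]$ then vanish, yielding $a_h(u, w_h) = (-\Delta u, w_h) = (f(u), w_h)$. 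Combined with the discrete scheme \eqref{Eqn:Varuh}, this collapses the residual to
$$a_h(u, w_h) - a_h(u_h, w_h) = (f(u) - f(u_h), w_h).$$

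To finish, I would estimate this residual via a Taylor expansion $f(u) - f(u_h) = f_u(\theta)(u-u_h)$ with $\theta$ between $u$ and $u_h$, use the boundedness of $f_u$ from Assumption \ref{Assf}, then Cauchy--Schwarz and Lemma \ref{Lem:L2leq|||} to obtain
$$|(f(u) - f(u_h), w_h)| \lesssim \|u - u_h\|_0 \, \|w_h\|_0 \lesssim \|u - u_h\|_0 \, |\|w_h\||_h.$$
The crucial input is that the $L^2$ error is one order better than needed: by Lemma \ref{Lem:uh-PhuL2} together with the triangle inequality, $\|u - u_h\|_0 \leq \|u - P_h u\|_0 + \|P_h u - u_h\|_0 \lesssim h^{r+1}\|u\|_{r+1}$. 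Thus the consistency contribution is $O(h^{r+1}\|u\|_{r+1})$, and combining it with the $O(h^r\|u\|_{r+1})$ interpolation bound gives the claimed estimate.

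I expect the main obstacle to be the consistency identity for $a_h(u, w_h)$ with $w_h \in V_h$ rather than with a test function in $H_0^1(\Omega) \cap H^2(\mathcal{T}_h)$; it is routine but must be spelled out carefully so that every boundary and interior edge contribution involving $[u]$ or $[\nabla u]$ is accounted for. The rest is a straightforward assembly of the Strang lemma, interpolation error, and the already-proved $L^2$ estimate \eqref{Eqn:uh-PhuL2}.
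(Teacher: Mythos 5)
Your proposal is correct and follows essentially the same route as the paper: the Strang-type bound of Lemma \ref{Lem:u-uhleqinf}, the interpolation estimate \eqref{Eqn:u-Pihu|||h} for the infimum term, and a Taylor expansion of $f$ together with Lemma \ref{Lem:L2leq|||} and the $L^2$ bound \eqref{Eqn:uh-PhuL2} for the consistency term. In fact you are slightly more careful than the paper, which justifies $a_h(u,w_h)=(f(u),w_h)$ for $w_h\in V_h$ by citing \eqref{Eqn:XR} even though $V_h\not\subset H_0^1(\Omega)\cap H^2(\mathcal{T}_h)$; your element-by-element rederivation of \eqref{Eqn:deltauv} using $[u]=0$ and $[\nabla u]=0$ is exactly the needed fix.
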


\begin{proof}
Using \eqref{Eqn:Varuh} and \eqref{Eqn:XR}, for any $w_h \in V_h$, we have
\begin{equation}\label{au-auhLS}
a_h(u, w_h) - a_h(u_h, w_h) = (f(u), w_h) - (f(u_h), w_h).
\end{equation}

Using Taylor expansion, it is obtained that
\begin{equation}\label{au-auhTL}
f(u)= f(u_h) + f_u(\theta_4)(u-u_h),
\end{equation}
where $\theta_4$ is between $u$ and $u_h$.

Substituting the Taylor expansion \eqref{au-auhTL} into \eqref{au-auhLS}, and using Cauchy inequality, Assumption \ref{Assf}, triangle inequality, \eqref{Eqn:Bprime}, \eqref{Eqn:L2leq|||} and Lemma \ref{Lem:uh-PhuL2}, we obatin
\begin{eqnarray} \nonumber
a_h(u, w_h) - a_h(u_h, w_h) &=& (f_u(\theta_4)(u-u_h), w_h)
\\ \nonumber
& \lesssim& \|u-u_h\|_0 \|w_h\|_0
%
\\ \nonumber
&\lesssim& (\|u-P_hu\|_0 + \|P_hu-u_h\|_0) |\|w_h\||_h
%
\\ \nonumber
&\lesssim& (\|u-P_hu\|_0 +  \|u-P_hu\|_0) |\|w_h\||_h
%
\\ \label{au-auh}
&\lesssim& h^{r+1}\|u\|_{r+1} |\| w_h\||_h.
\end{eqnarray}
Finally, \eqref{Eqn:u-uh|||} can be obtained by Lemmas \ref{Lem:u-uhleqinf}, \ref{Lem:u-Pihu|||h} and \eqref{au-auh}.
\end{proof}

The error estimate between the finite element solution $u_h$ and the solution $u$ of the problem \eqref{Eqn:u} under $L^2$ norm is given below.
\begin{theorem}\label{Lem:u-uhL2}
Assume $u \in H_0^1(\Omega) \cap H^{r+1}(\Omega)$ and $u_h \in V_h$ are the solutions of problem \eqref{Eqn:u} and \eqref{Eqn:Varuh}, respectively, then we have
\begin{equation}\label{Eqn:u-uhL2}
\| u -u_h\|_0 \lesssim h^{r+1} \|u\|_{r+1}.
\end{equation}
\end{theorem}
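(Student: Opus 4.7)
The plan is to obtain the $L^2$ estimate by inserting the projection $P_h u$ defined in \eqref{Eqn:Ph} as an intermediate quantity and splitting via the triangle inequality:
\begin{equation*}
\| u - u_h\|_0 \leq \| u - P_h u\|_0 + \| P_h u - u_h\|_0.
\end{equation*}
The point of going through $P_h u$ rather than, say, the Scott--Zhang interpolant $\Pi_h u$, is that $P_h$ is tailored to the DG bilinear form $a_h(\cdot,\cdot)$, so both terms on the right inherit the full $h^{r+1}$ convergence rate rather than only $h^r$ that one would get from a naive DG--norm estimate combined with Lemma \ref{Lem:L2leq|||}.

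First I would invoke Lemma \ref{Lem:u-Phu}, which was established in Section \ref{Sec:Ph} via a duality argument for the linear auxiliary problem associated with $P_h$, to conclude
\begin{equation*}
\| u - P_h u\|_0 \lesssim h^{r+1} |u|_{r+1}.
\end{equation*}
Next I would bound the discrete term $\|P_h u - u_h\|_0$ by appealing to Lemma \ref{Lem:uh-PhuL2}, which encodes the core nonlinear argument: the Brouwer fixed point theorem applied to $\Phi_h$ restricted to the compact convex set $B'$ centered at $P_h u$ with radius $C_0\|u-P_hu\|_0$ (Lemmas \ref{Lem:PhiBpsubsetBp} and \ref{Lem:Phi2continuity}), combined with the uniqueness already proved in Theorem \ref{Eqn:uhexistunique}, shows that $u_h \in B'$ and hence $\|u_h - P_h u\|_0 \leq C_0 \|u - P_h u\|_0 \lesssim h^{r+1}\|u\|_{r+1}$.

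Adding the two bounds completes the estimate \eqref{Eqn:u-uhL2}. There is no real obstacle left at this stage: all nonlinear difficulties (Taylor expansion of $f$, control of $\|w_h-u\|_0$ by the mesh size, the use of the superconvergent bound $\|P_hu-u\|_0\lesssim h^{r+1}\|u\|_{r+1}$ instead of $h^r$, and the fixed-point machinery) have already been absorbed into Lemmas \ref{Lem:u-Phu} and \ref{Lem:uh-PhuL2}. The proof is therefore essentially a triangle inequality followed by a two-line citation. The only thing to double-check while writing is that the constants implicit in $\lesssim$ do not depend on $h$, which is indeed the case since $C_0$ in \eqref{Eqn:Bprime} is fixed once and for all independent of the mesh.
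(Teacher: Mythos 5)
Your proposal is correct and follows exactly the paper's own argument: a triangle inequality through $P_h u$, with the first term handled by Lemma \ref{Lem:u-Phu} and the second by Lemma \ref{Lem:uh-PhuL2}. Nothing is missing.
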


\begin{proof}
Using triangle, Lemma \ref{Lem:uh-PhuL2} and \eqref{Eqn:u-Phu}, we obtain
\begin{eqnarray*}
\|u-u_h\|_0 &\leq& \|u - P_hu\|_0 + \|P_hu - u_h\|_0
\\
&\lesssim& \|u - P_hu\|_0 + \|u - P_hu\|_0
\\
&\lesssim&  h^{r+1} \|u\|_{r+1}.
\end{eqnarray*}
\end{proof}

\section{Numerical experiments}\label{Cha:5}
\setcounter{equation}{0}

In this section, we report several numerical
experiments in two-dimensional to verify the optimal convergence
order of the DG scheme \eqref{Eqn:Varuh}.
We implemented these experiments using the open-source scientific computing platform FEniCS \cite{LoggMardal12Book} using programming language Python.

Our model problem is
\begin{align*}
-\Delta u + u^3 = g(x)~\mbox{in}~\Omega, ~~~~u = 0~\mbox{on}~\partial \Omega,
\end{align*}
where the computational domain $\Omega = (0, 1) \times (0, 1)$, the exact solution  $u = \sin(\pi x) \sin(\pi y)$, and  $g(x)$ can be obtained from the exact solution.




{
We design several tests to verify our error estimates.
}
Firstly, we get a mesh $\mathcal{T}_h$ by partitioning the $x-$ and $y-$axes into equally subintervals, and then dividing each square into two triangles, see Figure \ref{fig:mesh}(a) for example.
We choose $V_h$ as the piecewise linear finite element space, i.e. $r = 1$,
and investigate the accuracy of the DG method \eqref{Eqn:Varuh} with various penalty parameters $\lambda$ and mesh sizes $h$.

\begin{figure}[htbp]
 \centering
 \begin{tabular}{cccc}
  \includegraphics[width=0.30\textwidth]{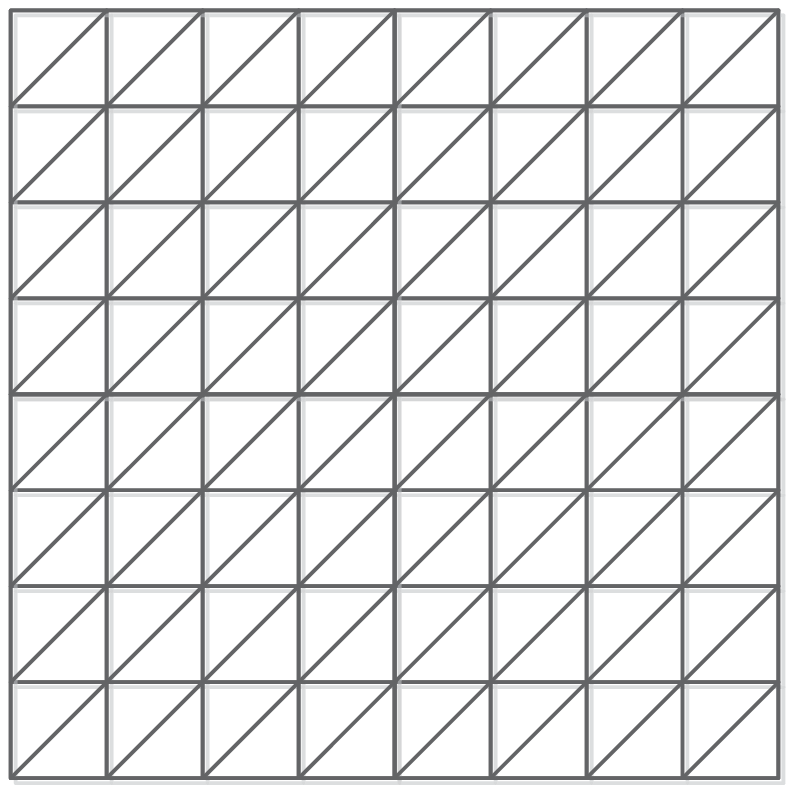} & \includegraphics[width=0.28\textwidth]{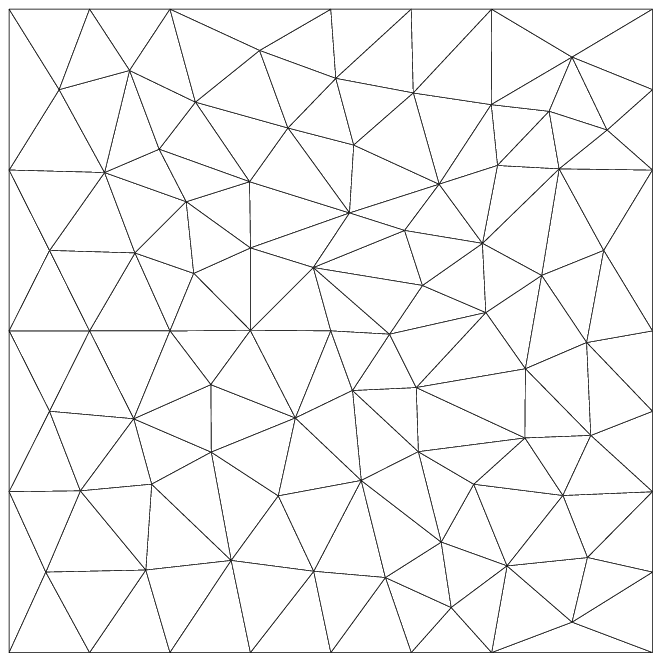}\\
  (a)   & (b)\\
 \end{tabular}
 \caption{Sturctured mesh and unstructured meshe.}
 \label{fig:mesh}
\end{figure}

\begin{table}[htbp]
\centering\caption{$r=1, \lambda = 10$}
\label{table-1}\vskip 0.1cm
\begin{tabular}{{|c|c|c|c|c|}}\hline
$h$     & $\|u-u_h\|_{0}$ &  order    & $\||u - u_h \||_h$    & order \\\hline
1/16    &1.10E-03   &--        &1.32E-01   &--    \\
1/32    &2.42E-04   &2.18      &6.44E-02   &1.04    \\
1/64    &5.61E-05   &2.11      &3.18E-02   &1.02    \\
1/128   &1.34E-05   &2.07      &1.58E-02   &1.01    \\\hline
\end{tabular}
\end{table}

\begin{table}[htbp]
\centering\caption{$r=1, \lambda = 100$}
\label{table-2}\vskip 0.1cm
\begin{tabular}{{|c|c|c|c|c|}}\hline
$h$     & $\|u-u_h\|_{0}$ &  order    & $\||u - u_h \||_h$    & order \\\hline
1/16    &1.03E-03   &--        &5.02E-02   &--     \\
1/32    &2.61E-04   &1.98      &2.41E-02   &1.06    \\
1/64    &6.56E-05   &1.99      &1.18E-02   &1.03    \\
1/128   &1.65E-05   &1.99      &5.85E-03   &1.01    \\\hline
\end{tabular}
\end{table}

\begin{table}[htbp]
\centering\caption{$r=1, \lambda = 1000$}
\label{table-3}\vskip 0.1cm
\begin{tabular}{{|c|c|c|c|c|}}\hline
$h$     & $\|u-u_h\|_{0}$ &  order    & $\||u - u_h \||_h$    & order \\\hline
1/16    &1.23E-03   &--        &1.72E-02   &--    \\
1/32    &3.09E-04   &1.99      &7.91E-03   &1.12    \\
1/64    &7.75E-05   &2.00      &3.83E-03   &1.05    \\
1/128   &1.94E-05   &2.00      &1.89E-03   &1.02    \\\hline
\end{tabular}
\end{table}

\begin{table}[htbp]
\centering\caption{$r=1, \lambda = 2000$}
\label{table-4}\vskip 0.1cm
\begin{tabular}{{|c|c|c|c|c|}}\hline
$h$     & $\|u-u_h\|_{0}$ &  order    & $\||u - u_h \||_h$    & order \\\hline
1/16    &1.25E-03   &--        &1.29E-02   &--    \\
1/32    &3.12E-04   &2.00      &5.70E-03   &1.18    \\
1/64    &7.82E-05   &2.00      &2.72E-03   &1.07    \\
1/128   &1.95E-05   &2.00      &1.34E-03   &1.02    \\\hline
\end{tabular}
\end{table}


Tables \ref{table-1}--\ref{table-4} show the $L^2$-norm and the DG norm of the error for different penalty parameters $\lambda = 10, 100, 1000$ and $2000$, respectively.
From this results, we observe that the convergence orders are optimal, as predicted by  \eqref{Eqn:u-uh|||} and  \eqref{Eqn:u-uhL2}.
And we can also notice that $\|u-u_h\|_{0}$ increases slightly as the penalty parameter $\lambda$ increases,
on the contrary, $|\|u-u_h\||_h$ decreases slightly as $\lambda$ increases.

{
Considering that the excessive penalty parameter will make the discontinuous finite element equations seriously ill-conditioned, therefore we might as well choose $\lambda = 100$ in the following experiments.
}

Some numerical results in Tables \ref{table-5}, \ref{table-6} are presented for the high order DG finite element spaces, i.e. $r=2, 3$,
and we can see that they are also optimal.
%

\begin{table}[htbp]
\centering\caption{$r=2, \lambda = 100$}
\label{table-5}\vskip 0.1cm
\begin{tabular}{{|c|c|c|c|c|}}\hline
$h$     & $\|u-u_h\|_{0}$ &  order    & $\||u - u_h \||_h$    & order \\\hline
1/16     &1.11E-05   &--        &3.05E-03  &--     \\
1/32     &1.25E-06   &3.15      &7.48E-04  &2.03    \\
1/64     &1.51E-07   &3.05      &1.85E-04  &2.02    \\
1/128    &1.85E-08   &3.03      &4.60E-05  &2.01    \\
\hline
\end{tabular}
\end{table}

\begin{table}[htbp]
\centering\caption{$r=3, \lambda = 100$}
\label{table-6}\vskip 0.1cm
\begin{tabular}{{|c|c|c|c|c|}}\hline
$h$     & $\|u-u_h\|_{0}$ &  order    & $\||u - u_h \||_h$    & order \\\hline
1/16    &9.30E-07   &--         &1.40E-04   &--       \\
1/32    &5.81E-08   &4.00       &1.75E-05   &3.00     \\
1/64    &3.51E-09   &4.05       &2.18E-06   &3.00     \\
1/128   &1.99E-10   &4.14       &2.73E-07   &3.00     \\\hline
\end{tabular}
\end{table}

At last, we consider the unstructured meshes (see Figure \ref{fig:mesh}(b) for example), and the corresponding numerical results are showd in  Tables \ref{table-7-1}--\ref{table-7-3} as follows.
We observe that all the the convergence orders are also optimal.
\begin{table}[htbp]
\centering\caption{Unstructured mesh, $r=1, \lambda = 100$}
\label{table-7-1}\vskip 0.1cm
\begin{tabular}{{|c|c|c|c|c|}}\hline
$h$     & $\|u-u_h\|_{0}$ &  order    & $\||u - u_h \||_h$    & order \\\hline
0.1       &1.89E-03   &--         &8.91E-02  &--       \\
0.05      &6.91E-04   &1.45       &5.08E-02  &0.81     \\
0.025     &1.74E-04   &1.99       &2.44E-02  &1.06     \\
0.0125    &4.43E-05   &1.97       &1.20E-02  &1.02     \\
\hline
\end{tabular}
\end{table}

\begin{table}[htbp]
\centering\caption{Unstructured mesh, $r=2, \lambda = 100$}
\label{table-7-2}\vskip 0.1cm
\begin{tabular}{{|c|c|c|c|c|}}\hline
$h$     & $\|u-u_h\|_{0}$ &  order    & $\||u - u_h \||_h$    & order \\\hline
0.1       &1.50E-04   &--          &1.17E-02  &--       \\
0.05      &1.63E-05   &3.20        &2.48E-03  &2.24      \\
0.025     &2.17E-06   &2.91        &6.46E-04  &1.94      \\
0.0125    &2.50E-07   &3.12        &1.51E-04  &2.09      \\
\hline
\end{tabular}
\end{table}

\begin{table}[htbp]
\centering\caption{Unstructured mesh, $r=3, \lambda = 100$}
\label{table-7-3}\vskip 0.1cm
\begin{tabular}{{|c|c|c|c|c|}}\hline
$h$     & $\|u-u_h\|_{0}$ &  order    & $\||u - u_h \||_h$    & order \\\hline
0.1       &8.68E-06   &--           &7.20E-04  &--       \\
0.05      &6.25E-07   &3.80         &8.95E-05  &3.01       \\
0.025     &3.94E-08   &3.99         &1.06E-05  &3.08       \\
0.0125    &2.27E-09   &4.12         &1.26E-06  &3.08       \\
\hline
\end{tabular}
\end{table}

\section*{Acknowledgements}

The first and second authors are  supported by the National Natural Science Foundation of China(Nos. 12071160, 11671159), the Guangdong Basic and Applied Basic Research Foundation (No. 2019A1515010724), the Characteristic Innovation Projects of Guangdong colleges and universities, China (No. 2018KTSCX044) and the General Project topic of Science and Technology in Guangzhou, China (No. 201904010117).
The third author  is also supported by China Postdoctoral Science Foundation (Grant No. 2019M652925).

%

 \end{document}